\documentclass[12pt]{amsart}

\usepackage{amsfonts, amssymb}
\usepackage{epsf}
\usepackage[hug,balance,dpi=1200]{diagrams} 



\newtheorem{theorem}{Theorem}[section]
\newtheorem{lemma}[theorem]{Lemma}

\newtheorem{corollary}[theorem]{Corollary}

\newtheorem{prop}[theorem]{Proposition}

\numberwithin{equation}{section}

\theoremstyle{definition}

\theoremstyle{remark}
\newtheorem{remark}[theorem]{Remark}


\renewcommand{\mathbb}{\mathsf}
\renewcommand{\mathbf}{\mathsf}
\begin{document}

\title[the tensor product of Cuntz semigroups]{The Cuntz semigroup of the tensor product of C*-algebras}
\author{Cristian Ivanescu}
\author{Dan Ku\v cerovsk\'y}
\address[1]{Department of Mathematics and Statistics, Grant MacEwan University, Edmonton, Alberta, T5J 4S2, e-mails:  ivanescu@ualberta.ca, ivanescuc@macewan.ca}
\address[2]{Department of Mathematics and Statistics, University of New Brunswick, Fredericton, New Brunswick, E3B 5A3, e-mail: dkucerov@unb.ca}

\subjclass[2000]{Primary 46L35, 46L06}

\date{Received December 8, 2014}

\commby{}

\begin{abstract}{
We calculate the Cuntz semigroup of a tensor product C$^*$-algebra $A \otimes A$. We restrict our attention to C$^*$-algebras which are unital, simple, separable, nuclear, stably finite, $\mathcal{Z}$-stable, satisfy the UCT, with finitely generated $K_0(A)$ and have trivial $K_1(A)$.} \\ \\  \\

 { On calcule le semigroupe de Cuntz d'une C$^*$-alg\`ebre produit tensoriel $A \otimes A$. On consid\`ere seulement les C*-alg\`ebres simples, s\'eparable, nucl\'eaires, $\grave{\mathrm{a}}$ \'el\'ement unit\'e, stablement finies, $\mathcal{Z}$-stables, satisfaisant au UCT, dont le groupe $K_0(A)$ est de type fini, et dont le groupe $K_1(A)$ est trivial.}
\end{abstract}

\maketitle


{\it Key words and phrases.} C$^*$-algebra, Cuntz semigroup, $K_0$-group, tensor product.

\newpage

\section{Introduction}
The Cuntz semigroup has been studied since the late seventies but only recently has it proven to be an important invariant for $C^*$-algebras. First, in the early 2000s, M.~Rordam and A.~Toms constructed examples of $C^*$-algebras that appeared to be counterexamples to the Elliott conjecture. Shortly afterwards, Toms realized that the Cuntz semigroup distinguishes some of the newly constructed algebras; hence, the Cuntz semigroup could be added to the Elliott invariant. Toms's discovery obviously prompted major questions, such as:  ``What is the range of the Cuntz semigroup?'' or ``What is the relation between the Cuntz semigroup and the Elliott invariant?'' or ``What are the properties of the Cuntz semigroup?''\\
{\indent} In this paper we propose to study one property of the Cuntz semigroup, namely, how the Cuntz semigroup of the tensor product, $A\otimes A$, of two identical copies of the C*-algebra $A$ relates to the Cuntz semigroup of $A$. 
It is well known that the tensor product of two positive elements is still a positive element. This property allows us to define a natural tensor product map from $A^{+} \otimes A^{+}$ to $(A\otimes A)^{+}$. The usual interpretation of $A^{+} \otimes A^{+}$ is as a subset of the usual tensor product of (nuclear) $C^*$-algebras. However, defining maps at the level of Cuntz semigroups requires defining tensor products of Cuntz semigroups, which are \textit{a priori} tensor products of abelian semigroups. Hence we must consider semigroup tensor products, discussed below.  Our approach to tensor products of Cuntz semigroups is to first take an algebraic tensor product of abelian semigroups and then to take a completion with respect to a suitable topology. See \cite[para. 6.L]{kelley} for more information on topological completions. The basic reason for introducing completions is that if we use only algebraic tensor products we can   obtain surjectivity results only in very limited situations, such as the finite-dimensional case. In the first three sections of this paper we work with the algebraic tensor product, and we use the term  ``dense range'' for results from which we later obtain surjectivity as a corollary, after taking a completion.  We consider completions in the last section of the paper. 

  As defined by Grillet \cite{grillet}, the tensor product of two abelian semigroups is constructed by forming a free abelian semigroup and passing to the quotient by the relations $(a + a')\otimes b=(a\otimes b)+(a'\otimes b)$ and $(a\otimes b')+(a \otimes b)=a\otimes (b+b')$. This definition is equivalent \cite{grillet} to the definition by a universal property. Stating the universal product definition for 
a family $(A_i )_{i\in I}$ of semigroups, we first say that a mapping 
$s$ of the Cartesian product of semigroups $\prod A_i $ 
into a semigroup $C$ is $I$-linear 
if the mapping is a semigroup homomorphism in each variable separately.  Then, if an $I$-linear  
mapping $t$ of $\prod A_i $ into a semigroup $T$ has the property that, for any 
$I$-linear mapping $s$ of $\prod A_i $ into some semigroup $C$,
there exists a unique homomorphism $u$ of $T$ into $C$ such that 
$s = u\circ  t,$ then we call the pair $(t,T),$ and also the semigroup $T,$ a tensor product of the family $(A_i )_{i\in I}.$  

It is well known that not every positive element of a tensor product can be written as a tensor product of positive elements, even after allowing sums. Thus, the naive tensor product map from $A^{+} \otimes A^{+}$ to $(A\otimes A)^{+}$ is in general not surjective. It seems interesting that, as we shall see, in some cases this map becomes surjective if we pass to Cuntz equivalence classes. 

In a recent paper \cite{APT14}, the question of determining surjectivity, at the level of Cuntz semigroups, of the natural tensor product map is posed; and left as an open problem. In that paper, the authors state that surjectivity does hold in the cases of AF algebras and $O_\infty$-stable algebras. This is not surjectivity at the level of algebraic tensor products, rather it  is  with respect to a particular choice of Cuntz semigroup tensor product introduced in \cite{APT14}, called the Cuntz category tensor product.  We will consider the case of simple, separable, unital, stably finite, nuclear, $\mathcal{Z}$-stable C$^*$-algebras, with finitely generated $K_0$ group, trivial $K_1$-group and satisfying the UCT, and we show that the image of the natural tensor product map on algebraic elements is dense in the sense that it becomes surjective after passing to a completion. We consider completions with respect to several different possible topologies, the coarsest of these being given by pointwise suprema, as will be explained in the last section of the paper. We use \cite{rorSR1} to deduce that algebras in the abovementioned class have stable rank one. The stable rank one property and its consequences are used several times in our proofs.

Brown, Perera and Toms, \cite{BPT08}, showed an important representation result for the original version of the Cuntz semigroup. This result was extended to the nonunital case, using the stabilized version of the Cuntz semigroup, by Elliott, Robert and Santiago, \cite{ERS}, and with more abstract hypotheses by Tikuisis and Toms,  \cite{TT}. Their results (see Theorems \ref{th:representations} and \ref{th:representations}) imply that for certain simple exact C$^*$-algebras, a part of the Cuntz semigroup is order isomorphic to an ordered semigroup of lower semicontinuous functions defined on a compact Hausdorff space.

\section{The Cuntz semigroup}
Let $A$ be a separable C*-algebra. For positive elements $a,b \in A \otimes \mathbb{K}$, we say that $a$ is Cuntz subequivalent to $b,$ and write $a\preceq b$, if  $v_{\textit{n}}^{}bv_{\textit{n}}^{*}\rightarrow a$ in the norm topology, for some sequence $(v_{\textit{n}})$ in $A \otimes  \mathbb{K}$. We say that $a$ is Cuntz equivalent to $b$ and write $a \sim b$ if $a \preceq b$ and $b\preceq a$. Denote by $Cu(A)$ the set of Cuntz equivalence classes of the  positive cone of $A \otimes \mathbb{K}$, i.e. $Cu(A)= (A \otimes \mathbb{K})^+/_{\sim}$. The order relation $a \preceq b$ defined for the positive elements of $A \otimes \mathbb{K}$ induces an order relation on $Cu(A)$: $[a] \leq [b]$ if $a \preceq b$, where $[a]$ denotes the Cuntz equivalence class of the positive element $a$. Note (\textit{cf.} \cite[page 151]{RW}) that this order relation does not need to be the algebraic order with respect to the addition operation defined by setting $[a]+[b]:=[a'+b']$, where $a'$ and $b'$ are orthogonal positive elements. It turns out that in a stabilization we can always find such orthogonal representatives, \textit{i.e.,} in $(A \otimes \mathbb{K})^+$ we have $a \sim a'$, $b \sim b'$ with $a'b'=0$. Moreover, the choice of the orthogonal representatives does not affect the Cuntz class of their sum. So the ordered set $Cu(A)$ becomes an abelian semigroup, under an addition operation that is sometimes called Brown-Douglas-Fillmore addition \cite{BDF}.
If $A$ is unital, we denote by $T(A)$ the simplex of tracial states. 
By $V(A)$ we denote the projection semigroup defined by the Murray von Neumann equivalence classes of projections in $A \otimes\mathbb{K}$. The order structure on $V(A)$ is defined through  Murray-von Neumann comparison of projections.


\subsection{Representations of the Cuntz semigroup} 

 Brown, Perera and Toms's representation result \cite{BPT08} for the Cuntz semigroup is as follows:
\begin{theorem} Let $A$ be a simple, separable, unital, exact, stably finite $\mathcal{Z}$-stable C$^*$-algebra. Then there is an order preserving isomorphism of ordered semigroups, \label{th:representations}
 $$W(A) \cong V(A) \sqcup Lsc(T(A), (0, \infty)).$$
\end{theorem}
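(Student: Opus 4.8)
The plan is to build the isomorphism explicitly from the dimension (rank) functions induced by traces, using $\mathcal{Z}$-stability to pin down the order. The first step is the structural dichotomy: every class in $W(A)$ is represented either by a projection or by a \emph{purely positive} element, i.e. a positive $a$ whose spectrum accumulates at $0$ and which is Cuntz equivalent to no projection. The projection classes are, tautologically, the subsemigroup $V(A)$; so the only real content of that summand is to confirm that it sits inside $W(A)$ as an order-and-addition-closed piece, and that the dichotomy is exhaustive and the two pieces are disjoint. For simple, stably finite $A$ this dichotomy is where stable rank one (available by \cite{rorSR1}) does its work: it guarantees that an element failing to have $0$ as a limit point of its spectrum is genuinely Cuntz equivalent to a projection, so there is no overlap between the two families of representatives. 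The task then reduces to identifying the purely positive classes with $Lsc(T(A),(0,\infty))$ and checking compatibility of the gluing.

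Next I would introduce the comparison map. To $[a]\in W(A)$ I assign the dimension function $\widehat a\colon T(A)\to[0,\infty)$, $\widehat a(\tau)=d_\tau(a):=\lim_{n\to\infty}\tau(a^{1/n})$, which is standardly affine and lower semicontinuous in $\tau$, additive in $[a]$, and order preserving; on purely positive elements it takes values in $(0,\infty)$ (strictly positive because $0$ is a spectral limit point, finite because $A$ is unital and stably finite). The crux is that on the purely positive part this map is an \emph{order embedding}: $\widehat a\le\widehat b$ forces $[a]\le[b]$. This is exactly where $\mathcal{Z}$-stability is decisive, via Rørdam's theorem that $\mathcal{Z}$-stable algebras are almost unperforated and hence enjoy strict comparison of positive elements by $2$-quasitraces; exactness upgrades quasitraces to traces (Haagerup), so that $T(A)$ alone suffices. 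To bridge the gap between the strict inequality that strict comparison needs and the non-strict hypothesis $\widehat a\le\widehat b$, I would apply strict comparison to the cut-downs $(a-\varepsilon)_+$, whose ranks are strictly dominated, and then pass to the limit as $\varepsilon\downarrow 0$. Note that this embedding is injective on purely positive classes but \emph{not} on $V(A)$, since inequivalent projections may share the same trace values when $K_0$ carries infinitesimal or torsion information; this is precisely why the answer is a disjoint union rather than a single function semigroup.

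The remaining and, I expect, hardest step is surjectivity onto all of $Lsc(T(A),(0,\infty))$. Given an arbitrary strictly positive lower semicontinuous (affine) $f$, I would approximate it from below by an increasing sequence of tractable functions (continuous affine, or finite suprema of such), realize each approximant as the dimension function of an explicit positive element manufactured inside $A\otimes\mathcal{Z}\cong A$ with prescribed small rank, and then recover $f$ as the supremum of the resulting increasing chain of Cuntz classes, invoking order-completeness of the Cuntz semigroup (existence of suprema of increasing sequences). The main obstacle throughout is this realization problem: upgrading the rank map from an order embedding to a surjection requires building positive elements of arbitrarily prescribed lower semicontinuous rank, and it is the lower semicontinuity (rather than continuity) of the target that forces the limiting argument and the full strength of $\mathcal{Z}$-stability. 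Finally I would assemble the two summands, verifying the addition and order relations \emph{across} $V(A)$ and $Lsc(T(A),(0,\infty))$ — in particular that a sum involving any purely positive element lands in the $Lsc$ part, and that a projection class compares with a purely positive class through the rank function — so that the bijection is an isomorphism of ordered semigroups.
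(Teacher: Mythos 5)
First, a point of comparison that matters here: the paper does not prove this statement at all --- it is quoted verbatim from Brown--Perera--Toms \cite{BPT08} as a known representation theorem. So your proposal can only be measured against the original argument of \cite{BPT08}, and in broad architecture you have reproduced it correctly: the dichotomy between projection classes and purely positive classes (via stable rank one, which follows from $\mathcal{Z}$-stability by \cite{rorSR1}), the rank map $[a]\mapsto d_\tau(a)$, the order embedding via strict comparison (almost unperforation from $\mathcal{Z}$-stability, with Haagerup's theorem upgrading quasitraces to traces), and then realization of functions. Your order-embedding step is essentially sound, though you assert rather than justify the key strictness: for purely positive $a$, every interval $(0,\varepsilon)$ meets the spectrum of $a$, and simplicity makes every trace faithful, whence $d_\tau((a-\varepsilon)_+)<d_\tau(a)\le d_\tau(b)$ for \emph{every} $\tau$; that is exactly why the cut-down trick bridges the non-strict hypothesis.

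There are, however, two genuine gaps in the second half. The decisive one is your appeal to ``order-completeness of the Cuntz semigroup (existence of suprema of increasing sequences).'' The theorem concerns the unstabilized semigroup $W(A)=M_{\infty}(A)^{+}/_{\sim}$, and this is precisely the object that in general fails to admit suprema of increasing sequences --- that defect is the very reason $Cu(A)=W(A\otimes \mathbb{K})$ was introduced in \cite{CEI}, and the paper itself recalls (the example of an element with spectrum $[\epsilon,1]$) that limits of purely positive classes can even jump into the projection part. Surjectivity therefore cannot be concluded by invoking abstract suprema: one must construct an honest element of some $M_N(A)$ whose rank function is the prescribed $f$ (this is possible because a finite-valued affine lower semicontinuous function on a compact simplex is automatically bounded, so $N$ can be fixed; the standard device is to use stable rank one to realize the approximating classes as a nested sequence of hereditary subalgebras of $M_N(A)$ and take a strictly positive element of the closure of their union). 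The second gap is that the realization of a \emph{continuous} affine strictly positive $f$ as a rank function --- which you correctly identify as the hardest step --- is left entirely as an assertion; in \cite{BPT08} this is the technical heart of the proof and requires the Cuntz--Pedersen theorem (identifying $\mathrm{Aff}(T(A))$ with the self-adjoint part of $A$ modulo the closed span of commutators, so that $f(\tau)=\tau(a)$ for some positive $a$) together with a further $\mathcal{Z}$-stability argument converting prescribed trace values $\tau(a)$ into prescribed rank values $d_\tau(\cdot)$. As it stands, your sketch postulates exactly the two facts that carry the weight of the theorem.
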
 
In the statement of the above theorem, $W(A)$ is the original definition of the Cuntz semigroup, \textit{i.e.}, $W(A)=M_{\infty}(A)^{+}/_{\sim},$ and $Lsc(T(A),(0,\infty))$  denotes the set of lower semicontinuous, affine, strictly positive functions on the tracial state space of the unital C*-algebra $A$. Addition within $Lsc(T(A),(0,\infty))$ is done pointwise and order is defined through pointwise comparison, as is usual for functions. For $[p] \in V(A)$ and $f \in Lsc(T(A),(0,\infty])$, addition is defined by
$$[p]+f:=\hat{[p]}+f \in Lsc(T(A),(0,\infty)\,),$$ where $\hat{[a]}(\tau)= \lim\limits_{n \rightarrow \infty}\tau(a^{1/n}), \tau \in T(A),$ which reduces to $\tau(a)$ when $a$ is a projection. The order relation is given by:
  $$[p] \leq f \;\mathrm{if}\;\hat{[p]}(\tau) < f(\tau)\;\mbox{ for all } \tau \in T(A),$$
  $$ f \leq [p]\;\mathrm{if}\;f(\tau) \leq \hat{[p]}(\tau) \mbox{ for all } \tau \in T(A). $$
 
 Elliott, Robert and Santiago's  representation result \cite{ERS} is very similar, and uses the stabilized Cuntz semigroup.  In this result, the functions that appear may take infinite values and the algebras are not necessarily unital. Since we restrict our attention to the case of unital algebras, the domain, $T(A),$ can be taken to be a compact simplex, which in turn gives a simplified version of their result:
 \begin{theorem} Let $A$ be a simple, separable, unital, exact, stably finite ${\mathcal Z}$-stable C*-algebra. Then there is an order preserving isomorphism of ordered semigroups,
 $$Cu(A) \cong V(A) \sqcup Lsc(T(A),(0, \infty]),$$
 where $Lsc(T(A),(0,\infty])$ will denote the set of lower semicontinuous, possibly infinite, affine, strictly positive functions on the tracial state space of a unital C*-algebra $A$. Within $Lsc(T(A), (0, \infty])$ addition is pointwise and pointwise comparison is used. For $[p] \in V(A)$ and $f \in Lsc(T(A),(0,\infty])$, addition is given by
$$[p]+f:=\hat{[p]}+f \in Lsc(T(A),(0,\infty]),$$ where $\hat{[a]}(\tau)= \lim\limits_{n \rightarrow \infty}\tau(a^{1/n}), \tau \in T(A),$ which reduces to $\tau(a)$ when $a$ is a projection. The order relation is given by:
  $$[p] \leq f \;\mathrm{if}\;\hat{[p]}(\tau) < f(\tau)\;\mbox{ for all } \tau \in T(A),$$
  $$ f \leq [p]\;\mathrm{if}\;f(\tau) \leq \hat{[p]}(\tau) \mbox{ for all } \tau \in T(A). $$
 \label{th:representations2}
\end{theorem}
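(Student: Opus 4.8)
The plan is to realise the isomorphism concretely through dimension (rank) functions. To each positive element $a \in (A\otimes\mathbb{K})^+$ I would associate the function $\hat{[a]}(\tau)=\lim_n \tau(a^{1/n})$ on $T(A)$, and first record that it is affine, lower semicontinuous (being an increasing pointwise limit of the continuous affine functions $\tau\mapsto\tau(a^{1/n})$), strictly positive by simplicity of $A$, and possibly $+\infty$. Since the target splits as $V(A)\sqcup Lsc(T(A),(0,\infty])$, I would begin with the dichotomy that every class in $Cu(A)$ is either the class of a projection — these forming a copy of $V(A)$, where stable rank one is used to identify Murray--von Neumann and Cuntz comparison of projections — or a \emph{non-projection} class, whose image under $\hat{\,\cdot\,}$ lands in $Lsc(T(A),(0,\infty])$. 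Additivity of $\hat{\,\cdot\,}$ is checked on orthogonal (BDF) representatives, which also makes the prescribed addition $[p]+f=\hat{[p]}+f$ compatible.

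The two properties doing the real work are strict comparison and richness of the Cuntz semigroup, both furnished by $\mathcal{Z}$-stability. I would invoke that for these algebras quasitraces coincide with traces (exactness together with Haagerup's theorem), so that dimension functions indexed by $T(A)$ capture all comparison data, and that $\mathcal{Z}$-stability yields strict comparison (Rørdam): if $\hat{[a]}(\tau)<\hat{[b]}(\tau)$ for all $\tau\in T(A)$ then $a\preceq b$. For injectivity on the non-projection part the delicate point is upgrading strict to non-strict comparison: if $a,b$ are non-projection elements with $\hat{[a]}\le\hat{[b]}$ pointwise, then because a non-projection (soft) element satisfies $[(a-\varepsilon)_+]<[a]$ strictly, each truncation $(a-\varepsilon)_+$ has rank function strictly dominated by $\hat{[b]}$, so strict comparison gives $(a-\varepsilon)_+\preceq b$; letting $\varepsilon\to 0$ yields $a\preceq b$, and symmetrising gives $a\sim b$ whenever the rank functions agree.

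Justifying the asymmetric order relations at the interface of the two summands is the next step. For $[p]\in V(A)$ and a non-projection class $f$, the relation $[p]\le f$ must require the strict inequality $\hat{[p]}(\tau)<f(\tau)$ for all $\tau$: a projection has a hard, attained rank, so equality at even one trace would force a spectral projection in any element realising $f$, contradicting that its class is a non-projection. Conversely $f\le[p]$ needs only $f(\tau)\le\hat{[p]}(\tau)$, since the soft element realising $f$ is the supremum of its truncations, to which strict comparison applies directly. Verifying these two relations, together with the comparisons internal to $V(A)$ and to $Lsc$, completes the order-isomorphism statement once surjectivity is in hand.

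The main obstacle I expect is surjectivity onto $Lsc(T(A),(0,\infty])$: given an arbitrary lower semicontinuous, affine, strictly positive, possibly infinite $f$, I must produce a positive element of $A\otimes\mathbb{K}$ whose dimension function is exactly $f$ and whose class is genuinely a non-projection. The strategy is to write $f$ as the supremum of an increasing sequence of continuous affine functions, realise each by a positive element — here $\mathcal{Z}$-stability supplies the divisibility and the abundance of soft elements needed to hit prescribed continuous ranks — and then pass to the supremum in $Cu(A)$, which exists and is well behaved since $Cu(A)$ admits suprema of increasing sequences. The hard part will be controlling this limit so that the resulting rank function is precisely $f$, correctly generating the infinite values while ensuring that no projection class is accidentally produced; this is exactly where the full strength of $\mathcal{Z}$-stability, rather than strict comparison alone, is required.
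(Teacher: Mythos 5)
You should first know how this statement functions in the paper: it is not proved there at all, but quoted from Elliott--Robert--Santiago \cite{ERS} (building on Brown--Perera--Toms \cite{BPT08}), specialized to the unital case so that $T(A)$ is a compact simplex; the paper only records that the isomorphism is implemented by the map $i([a])(\tau)=d_\tau(a)$. Your outline is a reasonable reconstruction of the architecture of those cited proofs: the rank functions $d_\tau$, exactness plus Haagerup's theorem to identify quasitraces with traces, R\o rdam's strict comparison from $\mathcal{Z}$-stability, the truncation argument with $(a-\varepsilon)_+$, and the dichotomy between projection classes and purely positive classes (which, as you note, rests on the spectral characterization of \cite[Prop.\ 2.23]{APT} and hence on stable rank one, available here by \cite{rorSR1}).

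As a proof, however, your proposal has two genuine gaps. First, the injectivity step contains an invalid inference: from the strict Cuntz inequality $[(a-\varepsilon)_+]<[a]$ you conclude that the rank function of $(a-\varepsilon)_+$ is strictly below $\hat{[a]}$ at \emph{every} trace. Strict inequality of classes only forces strictness of the rank functions somewhere, not everywhere. The correct route uses simplicity directly: since $0$ is a non-isolated point of the spectrum of $a$, one can choose a nonzero positive $g(a)$, with $g$ supported in $(0,\varepsilon)$, orthogonal to $(a-\varepsilon)_+$ and with $g(a)+(a-\varepsilon)_+\preceq a$; faithfulness of traces on a simple algebra gives $d_\tau(g(a))>0$, hence $d_\tau\bigl((a-\varepsilon)_+\bigr)<d_\tau(a)\le d_\tau(b)$ for all $\tau$, and only then does strict comparison apply. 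Second, and more seriously, the surjectivity step---producing, for an arbitrary lower semicontinuous affine strictly positive $f$, a positive element whose rank function is exactly $f$ and whose class is not a projection class---is precisely where the substance of \cite{BPT08} and \cite{ERS} lies, and your proposal only names the strategy (realize continuous affine functions, pass to suprema in $Cu(A)$) while explicitly deferring the hard part. Realizing even a single continuous strictly positive affine function as a rank function requires a nontrivial construction in the $\mathcal{Z}$-stable algebra, and one must also verify that $d_\tau$ respects suprema of increasing sequences so that the limit has rank function exactly $\sup_n f_n$ without collapsing to a projection class. What you have is a correct road map of the known proof, not a proof; given that the paper itself treats the theorem as imported, the appropriate resolution is to cite \cite{ERS} and \cite{BPT08} rather than to attempt to close these gaps inline.
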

In the proof of the above theorems, a semigroup map $i:Cu(A) \longrightarrow Lsc(T(A),(0, \infty])$ is defined,
$i([a])(\tau)=d_{\tau}(a),$ with $d_{\tau}$ to be explained later.

  These theorems show that the Cuntz semigroup, say $Cu(A)$, is the disjoint union of the semigroup of positive elements coming from projections in $(A\otimes \mathbb{K})^+$, denoted $V(A)$, and the set of lower semicontinuous, affine, strictly positive, functions on the tracial state space of $A$, denoted by $Lsc(T(A), (0, \infty])$. In \cite{BPT08}, the elements of the Cuntz semigroup that correspond to lower semicontinuous, affine, strictly positive, functions on the tracial state space are termed \textit{purely positive} elements. In general, the set of purely positive elements does  not form an object in the Cuntz category. To see this, consider an element $x$ with spectrum $[\epsilon,1]$ in a C*-algebra of stable rank 1. The increasing sequence $(x-\frac{1}{n} )_{+}$ is at first purely positive, but has a supremum that is projection-class.	
	
Theorem \ref{th:representations2} implies that the subsemigroup of purely positive elements of the Cuntz semigroup, for certain C*-algebras $A$, is isomorphic to the semigroup  $Lsc(T(A),(0, \infty]).$ 
 The convex structure of the space of tracial states, $T(A)$, makes it a Choquet simplex when $A$ is unital \cite{EH}, metrizable when $A$ is separable. 

 We will need a result about lower semicontinuous functions on  metrizable Choquet simplices. 
\begin{prop} Let $S$ be a compact metrizable Choquet simplex. Then every positive element of $Lsc(S, (0, \infty]),$ bounded or not, is the pointwise supremum of some pointwise nondecreasing sequence of continuous positive functions on $S.$ \label{prop:countable.sup.of.cont}
\end{prop}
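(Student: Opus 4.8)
The plan is to produce the sequence explicitly by the Moreau--Yosida (inf-convolution) regularization, which turns a lower semicontinuous function into an increasing sequence of Lipschitz functions. First I would fix a metric $d$ on $S$ compatible with its topology (available since $S$ is compact metrizable). Given $f \in Lsc(S,(0,\infty])$, I note two preliminary facts coming from compactness: being lower semicontinuous on a compact space, $f$ attains its infimum, and since $f$ is strictly positive this infimum is a constant $c := \min_{S} f > 0$, so $f \geq c > 0$ everywhere. If $f \equiv \infty$ then $f$ is already the increasing supremum of the constant functions $n$, so I may assume $f$ is finite at some point.

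Next I would define, for each $n$,
\[
 f_n(x) := \inf_{y \in S}\bigl( f(y) + n\, d(x,y) \bigr), \qquad x \in S,
\]
and verify the routine properties in order. The triangle inequality gives that $f_n$ is $n$-Lipschitz, hence continuous; since $f \geq c$ and $d \geq 0$ each summand is at least $c$, so $f_n \geq c > 0$ and the approximants are strictly positive; taking $y=x$ gives $f_n \leq f$; and the coefficient $n$ being increasing makes $(f_n)$ pointwise nondecreasing. Because $f$ is finite somewhere and $d$ is bounded on the compact $S$, each $f_n$ is finite-valued, so these are genuine continuous positive functions.

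The only real content, and the step I expect to be the main obstacle, is the convergence $\sup_n f_n(x) = f(x)$ at every point, handled so as to cover the "bounded or not" clause uniformly. For this I would fix $x$ and any $M < f(x)$: by lower semicontinuity there is $\delta>0$ with $f(y) > M$ whenever $d(x,y) < \delta$, while for $d(x,y) \geq \delta$ one has $f(y) + n\, d(x,y) \geq c + n\delta$; choosing $n$ with $c + n\delta > M$ then forces $f(y) + n\, d(x,y) > M$ for all $y$, whence $f_n(x) \geq M$. Letting $M \uparrow f(x)$ (allowing $M \to \infty$ at points where $f(x)=\infty$) yields $\sup_n f_n(x) \geq f(x)$, and the reverse inequality is the already-noted $f_n \leq f$. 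This is precisely where the infinite-valued case is absorbed, so I would use the direct inf-convolution rather than truncating $f$ to $f \wedge m$ and diagonalizing.

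Finally I would flag that, for this formulation, compactness is used only to secure the strictly positive lower bound $c$ and the finiteness of the $f_n$, and metrizability only to supply the metric $d$; the convexity and simplex structure of $S$ play no role here, because the approximants are required to be merely continuous and positive, not affine. (If one instead demanded continuous \emph{affine} approximants, the inf-convolution would fail to preserve affineness and one would have to invoke Choquet-theoretic separation on the simplex $S$ — that is the point at which the simplex hypothesis would become essential.)
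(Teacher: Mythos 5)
Your Moreau--Yosida argument is internally sound: on a compact metric space, $f_n(x):=\inf_{y\in S}\bigl(f(y)+n\,d(x,y)\bigr)$ is indeed a nondecreasing sequence of Lipschitz, strictly positive, finite-valued functions whose pointwise supremum is $f$, and your treatment of the lower bound $c=\min_S f>0$ and of infinite values is correct. But it proves a weaker statement than the one the paper actually establishes and uses. In this paper $Lsc(S,(0,\infty])$ denotes lower semicontinuous \emph{affine} strictly positive functions, and the paper's proof of Proposition \ref{prop:countable.sup.of.cont} (Edwards' separation theorem applied inductively, quoted as Lemma 6.1 of \cite{ABP}, followed by the compactness trick of replacing $f_n$ by $\sup\{f_n,\epsilon 1\}$ to get strict positivity) produces approximants that are continuous, \emph{affine} and positive. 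The omission of ``affine'' in the statement is a slip of wording, not of intent: downstream, in Lemma \ref{th:surjective.function.semigroup} and Theorem \ref{th:4components}, the proposition is invoked to reduce a general lower semicontinuous element to continuous \emph{elements of the Cuntz semigroup} --- that is, continuous affine functions --- precisely so that Lemma \ref{lem:pou.affine} can be applied to each approximant. That lemma is stated (and is only true) for continuous (bi)affine functions, since its approximating sums $\sum a_{ij}f_i(x)f_j(y)$ are biaffine and so are their uniform limits. Merely continuous approximants cannot be fed into it, so with your version of the proposition the proofs of those results collapse.

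The gap is genuine rather than cosmetic, because inf-convolution destroys affineness and nothing cheap repairs it. Take $S$ to be a triangle in $\mathbb{R}^2$ with the Euclidean metric, $v$ a vertex, and $f$ the function equal to $1$ at $v$ and $+\infty$ elsewhere; this $f$ is affine, lower semicontinuous and strictly positive, so it lies in $Lsc(S,(0,\infty])$, yet your approximants are $f_n(x)=1+n\,d(x,v)$, which are convex but not affine (restrict them to the edge opposite $v$). Producing \emph{affine} continuous minorants increasing to an arbitrary element of $Lsc(S,(0,\infty])$ is exactly what requires the Choquet-theoretic separation machinery on the simplex, i.e., the branch you explicitly set aside in your closing parenthesis. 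So your final remark diagnoses the fork correctly, but the paper needs the other branch; as a proof serving the role this proposition plays here, your argument is missing the key ingredient, namely Edwards' separation theorem.
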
 
\begin{proof}  Let $S$ be a compact metrizable Choquet simplex. Using Edward's separation theorem inductively shows, see Lemma 6.1 in \cite{ABP}, that every lower semicontinuous positive affine function on the simplex, possibly with infinite values, is the pointwise supremum of a strictly increasing sequence of affine continuous functions without infinite values. Compactness lets us arrange that the functions are everywhere positive, for example, we may replace each $f_n$ by the pointwise supremum of $\{f_n,\epsilon 1\}$ for a suitably small $\epsilon.$  
 \end{proof}


The dual of the Cuntz semigroup is denoted by $D(A),$ and is referred to as the set of all dimension functions.  It is the set of all additive, suprema-preserving, and order preserving maps $d:Cu(A) \rightarrow (0,\infty]$ such that, in the unital case, $d([1])=1.$ If the map on $A^+$ given by $x\mapsto d([x])$ is lower semicontinuous, we say that the dimension function is lower semicontinuous. The lower semicontinuous dimension functions correspond to the 2-quasitraces, by Proposition 2.24 of \cite{BK}; for more information, see \cite{ERS}.
 In the general case, once Theorem \ref{th:representations2} is no longer applicable, the dual space of the Cuntz semigroup is strictly larger than the set of traces, $T(A).$   See  \cite[page 307]{BlH} for an example of a dimension function that is not lower semicontinuous and thus does not come from either a trace or quasitrace. This example arises from a nonsimple and nonunital C*-algebra. We don't know if there is an example coming from a  simple and nuclear C*-algebra. We also note that nonsimple purely infinite algebras  may have a nontrivial Cuntz semigroup, but do not have traces. Thus, their dimension functions do not come from traces. 

 Any (necessarily not exact; see \cite{uffe}) C*-algebra for which the quasi-traces are not all traces will be an example where the states on the Cuntz semigroup do not correspond to the traces. 
 Reviewing the literature dealing with the Cuntz semigroup, the algebraic structure of the Cuntz semigroup is generally the main topic of interest, and the topological structure is hardly ever explicitly mentioned. We mention here some minor but apparently new observations about the topology of the Cuntz semigroup.
We have Hausdorff metrics: $D(A)$ is metrizable when  $A$ is separable, the metric being given by $\sum |d_1(x_k )-d_2(x_k)|2^{-k},$ where $x_k$ is a dense subsequence of the positive part of the unit ball of $A.$ Similarly, the Cuntz semigroup itself has at least a pseudometric, in the presence of separability, of the form  $\sum |d_k(x_1 )-d_k(x_2)|2^{-k},$ where $d_k$ is a dense subsequence of $D(A).$ 
We note that in general there may exist projection-class elements that are equal, under the dimension functions, to purely positive elements. In the stable rank 1 case, it is possible to discriminate between projection-class elements and purely positive elements on the basis of a spectral criterion.   

\subsection{Dimension functions and a conjecture of Blackadar and Handelman} 

We have seen, as in Theorem \ref{th:representations2}, that the map $i$ is useful in describing the order on the Cuntz semigroup. The map $i$ is  $i(a)=d_{\tau}(a),$ where we define $d_{\tau}(a)$ to be an extended version of the rank of $a$:
$d_{\tau}(a)= \lim\limits_{n \rightarrow \infty}\tau(a^{1/n})$, where $\tau$ is a tracial state. This map, $d_{\tau}$, also called a dimension function, is lower semicontinuous as a map from $A^+$ to $[0,\infty],$ possibly taking infinite values, and defines a state on the Cuntz semigroup. 
 In 1982, Blackadar and Handelman conjectured, see \cite{BlH}, that the set of lower semicontinuous dimension functions that come from traces is weakly dense in the set of dimension functions (or states on the Cuntz semigroup). The conjecture is known to be true for a large class of C*-algebras, see \cite[page 426]{Toms09},   that includes the algebras that we propose to study in this paper, namely: simple, unital, stably finite, nuclear, $\mathcal{Z}$-stable C*-algebras, with stable rank one, trivial $K_1$-group and with  the UCT property. We note that by \cite{rorSR1}, the stable rank one property follows from the other properties. From now on, we thus assume that the Blackadar-Handelman conjecture holds.


 Consider the map $t: A^{+} \times A^{+} \rightarrow Cu( A \otimes A)$ defined by
 $$t(a,b)=[a\otimes b].$$

Let us check that the above map $t$  respects Cuntz equivalence.

\begin{lemma} Let $A$ be a $\sigma$-unital C*-algebra. Given positive elements $a,a',b$ in $A$ such that $a' \preceq a$ we have $a \otimes b \preceq a' \otimes b$. 
\end{lemma}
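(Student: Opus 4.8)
The plan is to establish the subequivalence $a \otimes b \preceq a' \otimes b$ straight from the definition, by producing an explicit approximating sequence obtained from the one furnished by the Cuntz subequivalence hypothesis. First I would unpack the hypothesis: by the definition of Cuntz subequivalence there is a sequence $(v_n)$ in $A \otimes \mathbb{K}$ with $v_n a' v_n^{*} \to a$ in norm. The idea is to tensor each $v_n$ on the right by the identity, setting $w_n := v_n \otimes 1$, and to record the elementary algebraic identity
$$w_n (a' \otimes b) w_n^{*} = (v_n \otimes 1)(a' \otimes b)(v_n^{*} \otimes 1) = (v_n a' v_n^{*}) \otimes b.$$

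The second step is to pass to the limit. The linear map $x \mapsto x \otimes b$ is norm-continuous on the (spatial) tensor product, with $\|x \otimes b\| = \|x\|\,\|b\|$, so from $v_n a' v_n^{*} \to a$ we obtain $(v_n a' v_n^{*}) \otimes b \to a \otimes b$ in the norm of $(A \otimes A) \otimes \mathbb{K}$. Combined with the identity above this gives $w_n (a' \otimes b) w_n^{*} \to a \otimes b$, which is precisely the assertion $a \otimes b \preceq a' \otimes b$. All the real content is thus the multiplicativity and continuity of the tensor product; the rest is bookkeeping.

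The main obstacle is that, when $A$ is merely $\sigma$-unital rather than unital, the $1$ used above lies only in the multiplier algebra, so $w_n = v_n \otimes 1$ is a multiplier of $(A \otimes A) \otimes \mathbb{K}$ and not an element of the algebra itself, whereas the definition of subequivalence requires the conjugating sequence to lie in $A \otimes A \otimes \mathbb{K}$. I would resolve this by approximating $1$ with an approximate unit $(e_\lambda)$ of $A$: the element $v_n \otimes e_\lambda$ genuinely lies in $(A \otimes A) \otimes \mathbb{K}$ and satisfies $(v_n \otimes e_\lambda)(a' \otimes b)(v_n^{*} \otimes e_\lambda) = (v_n a' v_n^{*}) \otimes (e_\lambda b e_\lambda)$, and since $e_\lambda b e_\lambda \to b$ in norm one chooses $\lambda(n)$ for each $n$ so that $\|(v_n a' v_n^{*}) \otimes (e_{\lambda(n)} b e_{\lambda(n)}) - (v_n a' v_n^{*}) \otimes b\| < 1/n$; the resulting diagonal sequence $w_n := v_n \otimes e_{\lambda(n)}$ then lies in the algebra and still yields $w_n (a' \otimes b) w_n^{*} \to a \otimes b$ by the triangle inequality. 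A cleaner alternative, avoiding the diagonal step, is to invoke R\o rdam's characterization ($x \preceq y$ iff for every $\epsilon > 0$ there exist $\delta > 0$ and $s$ with $(x - \epsilon)_{+} = s (y - \delta)_{+} s^{*}$) and to tensor the exact factorizations it provides, where only finitely many algebra elements appear at each stage and the continuity issue disappears; this route, however, requires comparing the functional calculus of $a \otimes b$ with that of $a' \otimes b$, which is where I would expect the bulk of the technical care to be needed.
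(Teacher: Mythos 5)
Your argument is correct in substance and is essentially the paper's own approach: conjugate by elementary tensors that combine the sequence implementing the subequivalence in the first slot with an approximate unit in the second slot, then pass to the norm limit using $\|x\otimes b\|=\|x\|\,\|b\|$. One bookkeeping discrepancy deserves attention. Under the paper's convention ($x\preceq y$ iff $v_n y v_n^*\to x$), the hypothesis $a'\preceq a$ yields $v_n a v_n^*\to a'$, not $v_n a' v_n^*\to a$ as you wrote; what you actually prove is that $a\preceq a'$ implies $a\otimes b\preceq a'\otimes b$. The lemma as printed is internally inconsistent --- take $a'=0$: then $a'\preceq a$ holds trivially but $a\otimes b\preceq a'\otimes b=0$ fails whenever $a\otimes b\neq 0$ --- so either the hypothesis or the conclusion carries a typo, and you silently resolved it by adjusting the hypothesis, whereas the paper's proof resolves it the other way, showing $(e_n\otimes e_n)(a\otimes b)(e_n\otimes e_n)^*\to a'\otimes b$, i.e.\ $a'\otimes b\preceq a\otimes b$. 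Both readings deliver the monotonicity fact ($x\preceq y$ implies $x\otimes b\preceq y\otimes b$) that the ensuing corollary ($a\sim a'$ gives $a\otimes b\sim a'\otimes b$) needs, so your version serves equally well. Beyond that, your proof is actually more careful than the paper's: the paper uses a single sequence $e_n$ in both tensor slots, simultaneously asking that $e_n a e_n^*\to a'$ (which an approximate unit cannot do unless $a=a'$) and that $e_n b e_n^*\to b$; the rigorous reading requires exactly your separation into an implementing sequence $v_n$ and approximate-unit elements $e_{\lambda(n)}$, spliced by a diagonal $1/n$-estimate, with boundedness of $\|v_n a' v_n^*\|$ (automatic, since the sequence converges) justifying the triangle-inequality step. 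Your alternative route via R\o rdam's characterization would also work but is unneeded here, and you correctly identify its cost: relating $(a\otimes b-\epsilon)_+$ to elementary tensors of cut-downs is where the technical burden would migrate.
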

\begin{proof} Let $e_n$ be a countable approximate unit.  Since $a' \preceq a$, choose an $e_n$ such that $e_nae_n^* \rightarrow a'$. We have
\[(e_n \otimes e_n)(a \otimes b)(e_n \otimes e_n)^*-a' \otimes b=
e_n a e_n^* \otimes e_n b e_n^* -a' \otimes b,\]




and so 
 $$||(e_n \otimes e_n)(a \otimes b)(e_n \otimes e_n)^* -  a' \otimes b|| \rightarrow 0$$ 
\end{proof}

If $a \sim a'$ then $a\otimes b \sim a' \otimes b$ by applying the lemma twice, and thus we obtain the Corollary:
\begin{corollary} Consider the map $t: A^{+} \times A^{+} \rightarrow Cu( A \otimes A)$ defined by
 $t(a,b)=[a\otimes b].$ If $a$ and $a'$ are positive elements of $A$ that are Cuntz equivalent, then $t(a,b)=t(a',b).$  
\end{corollary}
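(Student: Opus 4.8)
The plan is to deduce the Corollary directly from the preceding Lemma, exploiting the symmetry built into the definition of Cuntz equivalence. First I would unpack the hypothesis: by definition, $a \sim a'$ means precisely that both subequivalences $a \preceq a'$ and $a' \preceq a$ hold. Each of these is exactly the kind of input the Lemma consumes, so no further preparation is needed before invoking it.

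Next I would apply the Lemma twice, once for each direction. From $a' \preceq a$ the Lemma yields $a \otimes b \preceq a' \otimes b$. Interchanging the roles of $a$ and $a'$ — which is legitimate because $a \preceq a'$ also holds — the Lemma yields $a' \otimes b \preceq a \otimes b$. The one bookkeeping point to watch is that the Lemma reverses the direction of the subequivalence on passing to $\otimes b$ (it sends the larger element on the left to the smaller-looking tensor), so I must feed it the correctly ordered pair on each invocation; since we have both inequalities at hand, both conclusions are available.

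Combining the two resulting subequivalences gives $a \otimes b \sim a' \otimes b$, hence the equality of Cuntz classes $[a \otimes b] = [a' \otimes b]$, which is exactly the assertion $t(a,b) = t(a',b)$. The only hypothesis to check is that the Lemma is applicable, namely that $A$ is $\sigma$-unital; this is automatic in the present setting, since all the algebras under consideration are separable. I do not expect any genuine obstacle here: the analytic content — producing the approximating sequence $(e_n \otimes e_n)(a \otimes b)(e_n \otimes e_n)^{*} \to a' \otimes b$ from an approximate unit — has already been handled inside the Lemma, so the Corollary is a purely formal consequence of applying that Lemma in both directions.
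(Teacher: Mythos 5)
Your proof is correct and matches the paper's own argument: the paper likewise obtains the corollary by applying the lemma twice, once in each direction of the Cuntz equivalence $a \sim a'$, to get $a \otimes b \sim a' \otimes b$ and hence $[a \otimes b] = [a' \otimes b]$. Your added remarks --- that separability gives $\sigma$-unitality, and that the lemma's stated direction of subequivalence must be tracked carefully --- are sound bookkeeping that the paper leaves implicit.
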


\section{Main result}

We begin with a technical lemma that is used in proving our main results. 
 
\begin{lemma} Let $S_1$ and $S_2$ be  compact metrizable Choquet simplices. Let  $F$ be a positive, (bi)affine, continuous finite-valued function on $S_1 \times S_2 .$ The function $F$ can be approximated uniformly from below by a finite sum $\sum_{i,j} a_{ij}f^{(1)}_i(x)f^{(2)}_j(y)$ where the $f^{(k)}_{\textit{i}}$ are continuous affine positive functions on $S_k$ and the $a_{ij}$ are positive scalars. \label{lem:pou.affine}
\end{lemma}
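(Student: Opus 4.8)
The plan is to reduce everything to the case of \emph{finite-dimensional} simplices, where a biaffine function is literally a finite positive combination of products of barycentric coordinates, and then to transport this representation back to $S_1$ and $S_2$ using that a metrizable Choquet simplex is an inverse limit of finite-dimensional ones. First I would invoke the Lazar--Lindenstrauss structure theorem: each $S_k$ is the inverse limit of a tower of finite-dimensional simplices along affine continuous surjections, so there are affine continuous surjections $p_n\colon S_1\to\Delta_n$ and $q_m\colon S_2\to\Gamma_m$ onto finite-dimensional simplices, compatible with the bonding maps $p_m=\beta_{m,n}\circ p_n$. A short compactness argument then shows that the fibers of $p_n$ shrink uniformly, i.e. $\sup\{d_1(x,x'):p_n(x)=p_n(x')\}\to 0$, and likewise for $q_m$: otherwise a subsequence of fiber-pairs would converge to a pair of distinct points not separated by any $p_m$, contradicting that the $p_m$ jointly separate points of the limit.

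Next I would build affine sections. Writing $v_0,\dots,v_k$ for the vertices of $\Delta_n$ and $\lambda_0,\dots,\lambda_k$ for the corresponding positive, affine, continuous barycentric coordinate functions, I choose preimages $\tilde v_i\in p_n^{-1}(v_i)$ and set $s_n(\xi)=\sum_i\lambda_i(\xi)\,\tilde v_i$, a convex combination lying in $S_1$. Then $s_n$ is affine and continuous with $p_n s_n=\mathrm{id}_{\Delta_n}$, so $r_n:=s_n p_n$ is an affine continuous retraction of $S_1$ onto a finite-dimensional face; since $p_n r_n=p_n$, each $r_n(x)$ lies in the same $p_n$-fiber as $x$, and the uniform fiber-shrinking gives $r_n\to\mathrm{id}_{S_1}$ uniformly. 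I construct $t_m$ and $r_m'=t_m q_m$ for $S_2$ analogously, with vertices $w_0,\dots,w_\ell$ and coordinates $\mu_0,\dots,\mu_\ell$.

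Now I push $F$ down: $G(\xi,\eta):=F(s_n(\xi),t_m(\eta))$ is biaffine (affine sections composed with the biaffine $F$), continuous, and inherits positivity from $F$. On the finite-dimensional product $G$ is represented \emph{exactly} by its vertex values, namely $G(\xi,\eta)=\sum_{i,j}F(\tilde v_i,\tilde w_j)\,\lambda_i(\xi)\,\mu_j(\eta)$, and because $F$ is continuous and strictly positive on the compact set $S_1\times S_2$ the coefficients satisfy $a_{ij}:=F(\tilde v_i,\tilde w_j)\ge\delta>0$. Pulling back along $p_n\times q_m$ and setting $f^{(1)}_i:=\lambda_i\circ p_n$ and $f^{(2)}_j:=\mu_j\circ q_m$ (positive, affine, continuous on $S_1,S_2$), the sum $\sum_{i,j}a_{ij}f^{(1)}_i(x)f^{(2)}_j(y)=F(r_nx,r_m'y)$ approximates $F$ uniformly, by uniform continuity of $F$ together with $r_n\to\mathrm{id}$ and $r_m'\to\mathrm{id}$. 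To obtain approximation strictly from below while keeping positive coefficients, I use that $\sum_{i,j}f^{(1)}_if^{(2)}_j\equiv 1$ (each family of barycentric coordinates sums to $1$) and subtract a small constant: for $\epsilon<\delta$ with $\|F-F(r_n\cdot,r_m'\cdot)\|<\epsilon$, the function $H:=\sum_{i,j}(a_{ij}-\epsilon)f^{(1)}_i(x)f^{(2)}_j(y)=F(r_nx,r_m'y)-\epsilon$ has all coefficients positive, satisfies $H\le F$, and lies within $2\epsilon$ of $F$.

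I expect the main obstacle to be carrying out the finite-dimensional reduction so as to \emph{preserve both biaffineness and positivity simultaneously}: the naive pushforward of a biaffine function (via infima over fibers or an arbitrary choice of fiber representative) is neither affine nor canonical. The device that resolves this is the affine section $s_n$ assembled from barycentric combinations of vertex preimages, together with the uniform convergence $r_n\to\mathrm{id}$ coming from the shrinking of fibers. Once this is in place, positivity of the coefficients is automatic from $F\ge\delta>0$, and the passage to approximation from below is the elementary observation that the products $f^{(1)}_if^{(2)}_j$ sum to the constant $1$.
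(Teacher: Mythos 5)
Your proof is correct, but it runs on the opposite side of the Lazar--Lindenstrauss duality from the paper's, and the convergence mechanism is genuinely different. The paper works in the function space $A(S)$: it invokes Theorem 3.2 of Lazar--Lindenstrauss to write $A(S)$ as the closure of an increasing union of finite-dimensional $\ell_\infty$-subspaces with partition-of-unity bases $(f_i)$ and dual points $(x_i)$ satisfying $f_i(x_j)=\delta_{ij}$, forms the interpolants $F_n(x,y)=\sum_{j,k}F(x_j,x_k)f_j(x)f_k(y)$, obtains uniform convergence from monotonicity plus Dini's theorem, and then needs a separate argument (density of the affine span of the $(x_i)$, via a dimension count in each finite-dimensional quotient) to identify the limit with $F$. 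You work instead with the simplices themselves: $S_k$ as an inverse limit of finite-dimensional simplices, affine sections $s_n$ assembled from vertex preimages, and retractions $r_n=s_np_n$ converging uniformly to the identity by your fiber-shrinking compactness argument. Your approximants are essentially the same objects as the paper's (the pulled-back barycentric coordinates $\lambda_i\circ p_n$ form a partition of unity spanning a finite-dimensional $\ell_\infty$-subspace of $A(S_1)$, with the $\tilde v_i$ playing the role of dual points), but the identity $\sum_{i,j}a_{ij}f_i^{(1)}(x)f_j^{(2)}(y)=F(r_nx,r_m'y)$ together with uniform continuity of $F$ gives you direct, quantitative uniform control; you need no monotonicity, no Dini, and no density-of-affine-span argument. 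This buys robustness: the paper's requirement that the bases be literally nested while each sums to $1$ in its own subspace is a delicate point (the extra basis elements of the larger subspace would then have to sum to zero), and your route never touches it.

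Two small points. First, the image of $r_n$ is an affinely embedded copy of $\Delta_n$ inside $S_1$, not a face of $S_1$; nothing in your argument actually uses the face property, so this is only a matter of wording. Second, your $f_i^{(1)}=\lambda_i\circ p_n$ and $f_j^{(2)}=\mu_j\circ q_m$ are only non-negative: each vanishes on the preimage of the face of $\Delta_n$ (respectively $\Gamma_m$) opposite the corresponding vertex, whereas the lemma asks for positive $f$'s, and the paper's closing paragraph exists precisely to remove such zeros. The fix is the same device you already use: since $F-H\ge\epsilon-\|F-F(r_n\cdot,r_m'\cdot)\|_\infty>0$ uniformly, replacing each $f_i^{(k)}$ by $f_i^{(k)}+\eta\cdot 1$ for sufficiently small $\eta>0$ keeps the sum below $F$ while making every factor strictly positive.
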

\begin{proof} Suppose first that $S_1 = S_2 =S.$ The affine continuous functions $A(S)$ on the compact Choquet simplex $S$ happen to be a Banach space whose dual is an $L_1$-space, see, \textit{e.g.}, \cite[pg.181]{LL}.  The space $A(S)$ is separable because $S$ is metrizable. We can therefore apply Theorem 3.2 in \cite{LL} to obtain an inductive limit decomposition of $A(S)$ in the form
$$ A(S)=\overline{\bigcup E_n,}$$
where the $E_n$ are finite-dimensional $l_\infty$-spaces and the connecting maps are inclusion maps. We denote the dimension of $E_n$ by $m_n.$ Each  subspace $E_i=\ell_{\infty}^{m_i}$ has a basis, $\{f_{j}\}_{j=1}^{m_i},$ of elements of $A(S_{m_i})^+$ with $\sum_{j} f_{j} =1$ in that subspace. Since the connecting maps in the above inductive limit are inclusion maps, we can inductively choose the bases in such a way that the set of basis functions for $\ell_{\infty}^{m_i}$ is contained in the set of basis functions for $\ell_{\infty}^{m_{i+1}}.$ The union of these sets of basis functions gives an infinite sequence $(f_i)\in A(S)^+$.  Choose a dual sequence $(x_i)\in S;$ thus, $f_i(x_j)$ is 1 if $i=j,$ and is zero otherwise. 
Passing to duals we also obtain a projective limit decomposition of $S$ in the form
\begin{diagram}[small]
S_{m_1} &\lTo & S_{m_2} & \lTo & S_{m_3}  & \lTo & \cdots \\
\end{diagram} where the $S_{m_i}$ are  $m_i$-dimensional simplexes and the maps are affine surjective maps. 

 
Consider the (bi)affine function $F_n (x,y):= \sum_{j,k}^{m_n} a_{jk} f_{j}(x) f_{k}(y)$ where $a_{jk}$ is the given function $F(x,y)$ evaluated at $(x_j,x_k).$ The $F_n$ are an increasing sequence of positive functions that converges pointwise, on a compact space, and Dini's theorem implies uniform convergence. Denoting the  (bi)affine function obtained in the limit by $G,$ we note that $G$ is equal to the given function $F$ at points of the form $(x_i,x_j).$ Because both $G$ and $F$ are already known to be continuous and positive, to prove equality it suffices to show that the $(x_i)$ are (affine) linearly dense in a suitable sense. We recall that the affine span of a subset of an affine space is the set of all finite linear combinations of the points of the subset, with coefficients adding up to 1. When we apply a projection map $p_n\colon S\rightarrow S_{m_n},$ a dimension argument using the basis property shows that the simplex generated by  $(p_n(x_i))$ has maximal dimension; or in other words, the affine span of the $(p_n(x_i))$ includes all of $S_{m_n}.$   It follows from the definition of the topology on a projective limit that the affine span of $(x_i)$ is dense in $S.$   It then follows that $G$ equals $F$ on a dense set, and thus everywhere.  
                 
								We thus obtain finite sums, $\sum_{i,j} a_{ij}f_i(x)f_j(y),$ that approximate the given function $F$ as required,  where the $a_{ij}$ are positive scalars, however, the non-negative functions $f_k$ may have zeros. We can arrange that there are no zeros by first approximating  $F-\varepsilon$ for sufficiently small $\varepsilon>0,$ and then adding small positive multiples of $1$ to the resulting functions $f_k.$ 
								
								The case  $S_1 \not= S_2$ is a straightforward generalization. 
\end{proof}
The next theorem appears to be of interest in the setting of nonunital stably projectionless C*-algebras. In the nonunital case, one has to find a compact simplex as base for the tracial cone: see \cite[Prop. 3.4]{TT}. We will subsequently  build upon the proof of the next theorem to accomodate projection-class elements. We denote the algebraic tensor product of abelian semigroups \cite{grillet} by $\otimes_{alg}.$ Under the hypotheses of the theorem, the Cuntz semigroups are sets of semi-continuous functions, and dense range means that the image is norm-dense among the continuous functions (which are dense in a pointwise sense among the lower-semi-continuous functions in question, by Proposition \ref{prop:countable.sup.of.cont}). 
See Section 4 for more information on the topology that is implied by the density condition that we are using.
 
The next lemma extends, so to speak,  the familiar properties of tensor products of continuous functions to the case of lower semi-continuous functions, by combining  Proposition \ref{prop:countable.sup.of.cont} and Lemma \ref{lem:pou.affine}. 
\begin{lemma} Assume $A$ and $B$ are separable C*-algebras that are such that $Cu(A) \cong  Lsc(T(A), (0, \infty]),$
$Cu(B) \cong  Lsc(T(B),(0, \infty]),$ and $Cu(A\otimes B) \cong Lsc(T(A)\times T(B),(0,\infty]).$ Then the tensor product map $t$ from $Cu(A) \otimes_{alg} Cu(B)$ to  $Cu(A \otimes B)$ has dense range, in the sense that every element of $Cu(A \otimes B)$ can be obtained as the pointwise supremum of some sequence of elements of the range of the map $t.$ \label{th:surjective.function.semigroup} 
\end{lemma}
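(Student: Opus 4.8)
The plan is to transport everything into the function picture supplied by the hypotheses and Theorem \ref{th:representations2}, where the map $t$ becomes completely concrete. Write $S_1 = T(A)$ and $S_2 = T(B)$, and identify $Cu(A)$, $Cu(B)$ and $Cu(A\otimes B)$ with $Lsc(S_1,(0,\infty])$, $Lsc(S_2,(0,\infty])$ and $Lsc(S_1\times S_2,(0,\infty])$, so that $[a]$ corresponds to the affine lower semicontinuous function $\tau\mapsto d_\tau(a)$. The first task is to compute the image of an elementary tensor. Since $(a\otimes b)^{1/n}=a^{1/n}\otimes b^{1/n}$ for positive $a,b$, one has $d_{\tau\otimes\sigma}(a\otimes b)=\lim_n \tau(a^{1/n})\sigma(b^{1/n})=d_\tau(a)\,d_\sigma(b)$, so $t([a]\otimes[b])$ is exactly the biaffine ``separable'' function $(\tau,\sigma)\mapsto d_\tau(a)\,d_\sigma(b)$. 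As $t$ is a semigroup homomorphism and addition in the function semigroup is pointwise, a general element of the range of $t$ is a finite sum $\sum_k d_\tau(a_k)\,d_\sigma(b_k)$ of such products. Moreover a positive scalar multiple $\lambda\,d_\sigma(b)$ is again a continuous strictly positive affine function, hence the function of some element of $Cu(B)$, so positive scalar coefficients can be absorbed into one factor; thus the range of $t$ contains every finite sum $\sum_{i,j} a_{ij}\,f_i(\tau)\,g_j(\sigma)$ with $f_i\in Lsc(S_1,(0,\infty])$, $g_j\in Lsc(S_2,(0,\infty])$ continuous and $a_{ij}>0$.

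Next I would reduce to continuous functions and then split them. The given element $g\in Cu(A\otimes B)$ is, by Theorem \ref{th:representations2}, an affine lower semicontinuous positive function $\omega\mapsto d_\omega(c)$ on the compact metrizable Choquet simplex $T(A\otimes B)$. Proposition \ref{prop:countable.sup.of.cont} then produces an increasing sequence of continuous affine positive functions $G_n$ on $T(A\otimes B)$ with $\sup_n G_n(\omega)=d_\omega(c)$ for all $\omega$. Restricting to the product traces $\tau\otimes\sigma$, each $G_n$ becomes a continuous finite-valued function on $S_1\times S_2$ that is affine in each variable separately, and $\sup_n G_n(\tau\otimes\sigma)=d_{\tau\otimes\sigma}(c)=g(\tau,\sigma)$, i.e. $g=\sup_n\bigl(G_n|_{S_1\times S_2}\bigr)$ in $Lsc(S_1\times S_2,(0,\infty])$. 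Now I apply Lemma \ref{lem:pou.affine} with these $S_1,S_2$ to each biaffine continuous $G_n|_{S_1\times S_2}$: it is approximable uniformly from below by finite sums $\sum_{i,j} a_{ij}f_i(\tau)g_j(\sigma)$ with $f_i,g_j$ continuous affine strictly positive and $a_{ij}>0$, which by the first paragraph lie in the range of $t$. Choosing, for each $n$, such a sum $H_n$ with $G_n-\tfrac1n\le H_n\le G_n$ on $S_1\times S_2$, a diagonal argument finishes the proof: from $H_n\le G_n\le g$ we get $\sup_n H_n\le g$, while $H_n\ge G_n-\tfrac1n$ together with $G_n(\tau,\sigma)\uparrow g(\tau,\sigma)$ gives $\sup_n H_n(\tau,\sigma)\ge g(\tau,\sigma)$ at every point. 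Hence $g=\sup_n H_n$ is a pointwise supremum of elements of the range of $t$, as required; the $H_n$ need not be monotone, but the statement asks only for a pointwise supremum.

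The step I expect to require the most care is the passage between the two simplices: Proposition \ref{prop:countable.sup.of.cont} is available only on a genuine Choquet simplex, namely $T(A\otimes B)$, whereas Lemma \ref{lem:pou.affine} and the factorization of $t$ live on the product $S_1\times S_2=T(A)\times T(B)$, which is not itself a simplex. The argument hinges on the fact, built into the hypothesis $Cu(A\otimes B)\cong Lsc(T(A)\times T(B),(0,\infty])$, that an affine function on $T(A\otimes B)$ restricts to, and is recovered from, a biaffine function of the product traces; this is what makes the restricted $G_n$ the correct inputs for Lemma \ref{lem:pou.affine} and makes the recovered supremum equal to $g$ as an element of $Cu(A\otimes B)$. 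The remaining bookkeeping, namely maintaining strict positivity so that each factor is a legitimate element of $Cu(A)$ or $Cu(B)$ and absorbing the scalars $a_{ij}$, is handled by the final no-zeros adjustment already present in Lemma \ref{lem:pou.affine}.
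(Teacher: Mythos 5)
Your proposal is correct and follows essentially the same strategy as the paper's proof: identify $t$ with the pointwise-product map on the function semigroups, use Lemma \ref{lem:pou.affine} to approximate continuous elements uniformly from below by finite sums $\sum a_{ij}f_i(\tau)g_j(\sigma)$ lying in the range, and then obtain general lower semicontinuous elements as pointwise suprema via Proposition \ref{prop:countable.sup.of.cont}. The one genuine difference is how the supremum step is justified: the paper invokes Proposition \ref{prop:countable.sup.of.cont} directly for functions on $T(A)\times T(B)$, even though that proposition is stated only for a Choquet simplex and the product of two simplices is not one; you instead route through the genuine simplex $T(A\otimes B)$ via Theorem \ref{th:representations2} and restrict the resulting continuous affine approximants to the product traces, which patches that imprecision and also makes explicit the diagonal bookkeeping ($G_n - \tfrac1n \le H_n \le G_n$) that the paper leaves implicit. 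The small cost of your route is that it uses Theorem \ref{th:representations2} for $A\otimes B$ (and the compatibility of that representation with the hypothesized isomorphism onto $Lsc(T(A)\times T(B),(0,\infty])$), which goes slightly beyond the lemma's bare hypotheses — though, as you note, this compatibility is implicitly assumed in the paper as well when it asserts that the representations ``intertwine'' $\Phi$ with $t$.
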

\begin{proof} When $A$ and $B$ are  non-unital, we use \cite[Prop. 3.4]{TT} to find a compact simplex, denoted $T(A)$ or respectively $T(B),$ as a base for the tracial cone.
 Define $\Phi : Lsc(T(A),(0,\infty]) \otimes_{alg} Lsc(T(B),(0,\infty]) \rightarrow Lsc(T(A)\times T(B),(0,\infty])$, a semigroup morphism that acts on the elements $(f \otimes g)$ of $Lsc(T(A),(0,\infty]) \otimes_{alg} Lsc(T(B),(0,\infty])$ as
$$\Phi(f \otimes g)(x,y)=f(x)g(y),$$ where the right hand side is the pointwise product, $f(x)g(y),$ interpreted as a function on a Cartesian product. We note that the semigroup tensor product on the left is defined \cite[page 270]{grillet} by forming a free abelian semigroup and imposing all possible bilinearity relations: $(a+a')\otimes_{alg} b=a\otimes_{alg} b+a'\otimes_{alg} b$ and  $a\otimes_{alg} b'+a \otimes_{alg} b=a\otimes_{alg} (b+b')$. Evidently these relations do not alter the function obtained on the right hand side.
 
Note that the product of non-negative lower semicontinous functions is a non-negative lower semicontinuous function. The given representations of the Cuntz semigroup(s) evidently intertwine $\Phi$ with the given tensor product map $t.$
%
%

%

We now consider the range of the map $\Phi.$ Let $G$ be a continuous element of $Cu(A\otimes B) \cong Lsc(T(A)\times T(B),(0,\infty]).$ 
We recall that for a unital C*-algebra, the tracial states, $T(A)$, are a Choquet simplex (see
\cite{EH}). In the nonunital case, $T(A)$ is understood to be a compact and simplicial base of the tracial cone (as already mentioned above, see also \cite{TT}). Since $A$ is separable, the simplex is metrizable. By  Lemma \ref{lem:pou.affine} we have an element $F_n$ of the form $\sum_{i}^{N}f_i(x)g_i(y)$ that is bounded above by $G$ and approximates the continuous function $G$ within $\tfrac{1}{n}.$  Since the functions $f_i$ and $g_i$ are positive,  the element $F_n$ is in the image of the map $\Phi.$ The more general case of $G$ being lower semicontinuous follows by taking suprema, using Proposition \ref{prop:countable.sup.of.cont}. 
\end{proof}



\subsection{Beyond the purely positive case}

 The next step is to assume our algebra $A$ has non-trivial projections, so that  $V(A)$, the projection monoid, is non-trivial. \\
  We will make use of the fact that the Cuntz semigroup  is the disjoint union of projection elements from the Cuntz semigroup, denoted $V(A),$ and purely positive elements, denoted $Lsc(T(A),(0,\infty])$. Ara, Perera and Toms, \cite[Proposition 2.23]{APT}, prove that that for algebras of stable rank one, the Cuntz class of a positive element is given by a projection if and only if $\{0\}$ is not in the spectrum or if it is an isolated point of the spectrum.

If $a \in M_{\infty}(A)^+, b \in M_{\infty}(B)^+$ are positive elements then it follows that $a \otimes b$ is a positive element in $M_{\infty}(A \otimes B)^+$. This induces a bilinear morphism from $Cu(A) \times Cu(B)$ to $Cu(A \otimes B)$ which in turn induces a natural Cuntz semigroup map 
$$ t: Cu(A) \otimes_{alg} Cu(B)  \rightarrow Cu(A \otimes B)$$
$$t([a] \otimes [b])=[a \otimes b].$$

In the case that $A=B$ and $A$ is a simple, separable, unital, stably finite, nuclear, ${\mathcal Z}$-stable C*-algebra, satisfies the UCT, with stable rank one and vanishing $K_1$ group, we now show that the map $t$ has  dense range. We begin with a technical lemma on enveloping groups:
\begin{lemma} Let $V$ be a semigroup, and let $G$ denote the formation of the enveloping group. We have $$G(V \otimes V) = G(V)\otimes_Z G(V),$$ where $\otimes$ denotes the tensor product of semigroups, and $\otimes_Z$ denotes the tensor product of abelian groups.
\label{lem:tech}\end{lemma}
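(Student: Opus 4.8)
The plan is to prove the isomorphism by showing that the two sides corepresent the same functor on the category of abelian groups, and then to invoke the Yoneda lemma. Fix an abelian group $H.$ Call a map $V \times V \to H$ a \emph{bimorphism} if it is a semigroup homomorphism in each variable separately (this is the $I$-linearity of the Introduction, with $H$ viewed as a semigroup), and call a map $G(V) \times G(V) \to H$ \emph{bilinear} if it is a group homomorphism in each variable. Writing $\mathrm{Bim}$ and $\mathrm{Bil}$ for the corresponding sets of such maps, the goal is to establish the following chain of bijections, natural in $H:$
$$\mathrm{Hom}_{\mathrm{Grp}}\bigl(G(V \otimes V),\,H\bigr)\;\cong\;\mathrm{Bim}\;\cong\;\mathrm{Bil}\;\cong\;\mathrm{Hom}_{\mathrm{Grp}}\bigl(G(V)\otimes_Z G(V),\,H\bigr).$$

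First I would record the two outer, purely formal, isomorphisms. The enveloping-group functor $G$ is left adjoint to the forgetful functor from abelian groups to abelian semigroups, so $\mathrm{Hom}_{\mathrm{Grp}}(G(V\otimes V),H)\cong\mathrm{Hom}_{\mathrm{Sgp}}(V\otimes V,H);$ by Grillet's universal property of the semigroup tensor product, the right-hand side is exactly $\mathrm{Bim}.$ Symmetrically, the universal property of the abelian-group tensor product $\otimes_Z$ identifies $\mathrm{Hom}_{\mathrm{Grp}}(G(V)\otimes_Z G(V),H)$ with $\mathrm{Bil}.$ Both identifications are natural in $H$ by the uniqueness clauses in the respective universal properties.

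The substantive middle step is the bijection $\mathrm{Bim}\cong\mathrm{Bil}.$ In one direction, I restrict a bilinear map along the canonical semigroup homomorphism $V \to G(V)$ in each variable; the result is visibly a bimorphism. For the converse I extend a bimorphism $\phi$ one variable at a time: for each fixed $b\in V$ the map $\phi(-,b)\colon V \to H$ is a semigroup homomorphism into a group, hence extends uniquely to a group homomorphism $\Phi(-,b)\colon G(V)\to H;$ then, for fixed $x\in G(V),$ I check that $b\mapsto \Phi(x,b)$ is again a semigroup homomorphism and extend it, obtaining $\hat\phi\colon G(V)\times G(V)\to H.$

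The main obstacle, and essentially the only place that requires care, is verifying that this separately-extended map $\hat\phi$ is genuinely bilinear rather than merely additive on the image of $V$ in each slot. Here I would exploit that $V$ generates $G(V)$ as a group, together with the principle that a group homomorphism out of $G(V)$ is determined by its values on the image of $V.$ Concretely, for fixed $y$ the expression $\hat\phi(x+x',y)-\hat\phi(x,y)-\hat\phi(x',y)$ is a group homomorphism in $y$ that vanishes whenever $y$ lies in the image of $V,$ hence vanishes identically; the identical argument, applied in the other slot and to inverses, gives full bilinearity, and uniqueness of the extensions makes the whole correspondence natural in $H.$ Granting this, the displayed chain of natural isomorphisms holds, so $G(V\otimes V)$ and $G(V)\otimes_Z G(V)$ corepresent the same functor; the Yoneda lemma then yields the asserted canonical isomorphism, which by construction is compatible with the canonical maps out of $V\times V.$
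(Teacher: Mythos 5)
Your proof is correct, but it follows a genuinely different route from the paper's. The paper argues entirely inside Grillet's framework of cancellative tensor products: writing $C(S)$ for the greatest cancellative homomorphic image of a commutative semigroup and $S_1\otimes^C S_2 := C(S_1\otimes S_2)$, it quotes from Grillet's second paper that $\otimes^C$ is associative and commutative and that $G(V)=V\otimes^C Z$, and then computes $G(V)\otimes^C G(V) = (V\otimes^C V)\otimes^C Z = C(V\otimes V)\otimes^C Z = G(C(V\otimes V)) = G(V\otimes V)$, finally identifying $\otimes^C$ with $\otimes_Z$ on abelian groups via Proposition 1.4 of Grillet's first paper. Your argument instead shows that both sides corepresent the functor $H\mapsto \mathrm{Bim}(V\times V,H)$ on abelian groups and invokes Yoneda; the substantive content is your extension step $\mathrm{Bim}\cong\mathrm{Bil}$, which is sound: the one-variable-at-a-time extension exists by the adjunction defining $G$, differences of homomorphisms into an abelian group are again homomorphisms, and since every element of $G(V)$ has the form $\iota(a)-\iota(a')$ with $a,a'\in V$, a homomorphism out of $G(V)$ vanishing on $\iota(V)$ vanishes identically, which is exactly what your ``vanishes on generators'' step needs. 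Two small points of care: say ``factors through $V\to G(V)$'' rather than ``extends,'' since the canonical map $\iota\colon V\to G(V)$ need not be injective for a general (non-cancellative) $V$ (your argument does not actually use injectivity, so nothing breaks); and note that $V$ must be a commutative semigroup throughout, as both Grillet's tensor product and the abelianness of $G(V)$ require it (the paper's application to $V(A)$ satisfies this). What each approach buys: the paper's proof is shorter but leans on two specific citations to Grillet's work; yours is self-contained modulo standard universal properties, and makes explicit that the isomorphism is the canonical one compatible with the maps out of $V\times V$, which is what the paper actually uses later when identifying the tensor product map on $V(A)\otimes_{alg}V(A)$ with the positive cone of $K_0(A)\otimes K_0(A)$.
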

\begin{proof} Let us denote by $C(S)$ the greatest homomorphic image of a commutative semigroup $S$. This is just the quotient of $S$ by an equivalence relation:  $x\sim y$ if $x+b=y+b$ for some element $b.$ Following \cite{grillet2}, we define a tensor product of commutative semigroups by $S_1 \otimes^{C} S_2 := C(S_1\otimes S_2).$ It is shown on page 201 of \cite{grillet2} that this is an associative and commutative tensor product of commutative semigroups. By Proposition 3 of \cite{grillet2}, we have that the enveloping group of $V$ is given by
$$G(V) = V\otimes^{C} Z, $$ where $Z$ denotes the group of integers.
But then, by the associative and commutative properties of the tensor product,
$$G(V)\otimes^{C} G(V) = (V\otimes^C V) \otimes^{C} Z. $$
It then follows that 
\begin{eqnarray*}G(V)\otimes^{C} G(V) =& C(V\otimes V) \otimes^{C} Z \\
                                      =& G(C(V\otimes V)).\\
\end{eqnarray*}

The definition of an enveloping group is such that the enveloping group of $C(V\otimes V)$ is the same as the enveloping group of $V\otimes V.$
We have therefore shown that $G(V\otimes V)=G(V)\otimes^{C} G(V).$ However, by Proposition 1.4 of \cite{grillet}, the semigroup tensor product of abelian groups coincides with the usual tensor product of abelian groups, and  since groups are already cancellative, there is then no difference between the tensor products $\otimes$, $\otimes_Z$ and $\otimes^C$ when the factors are both abelian groups. It follows that $G(V \otimes V) = G(V)\otimes_Z G(V),$ as claimed. 
  \end{proof}

\begin{lemma}\label{lem:monoids}
If a C*-algebra $A$ is simple, separable, unital, nuclear, ${\mathcal Z}$-stable, and finitely generated $K_0(A)$, $K_1(A)=\{0\},$ and satisfies the UCT, then the natural map $$ Cu(A) \otimes_{alg} Cu(A)  \rightarrow Cu(A \otimes A),$$ given by
 $$([a] \otimes [b])\mapsto[a \otimes b]$$ is an isomorphism from $V(A) \otimes _{alg} V(A)$ to $V (A \otimes A)$, \textit{i.e.}
 $$V(A) \otimes_{alg} V(A) \cong V(A \otimes A).$$
\end{lemma}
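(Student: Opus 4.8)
The plan is to reduce the statement to a computation in $K$-theory, exploiting that $K_0$ is precisely the enveloping group of the projection semigroup $V$. First I would record that, since $A$ is simple, separable, unital, stably finite and $\mathcal Z$-stable, it has stable rank one by \cite{rorSR1}; the same holds for $A\otimes A$, which remains simple, separable, unital, nuclear and $\mathcal Z$-stable, and is stably finite because the product of a faithful trace on $A$ with itself is a faithful trace on $A\otimes A$. Stable rank one forces cancellation in both projection semigroups, so the canonical maps $V(A)\to G(V(A))=K_0(A)$ and $V(A\otimes A)\to G(V(A\otimes A))=K_0(A\otimes A)$ are injective. This realises $V(A)$ and $V(A\otimes A)$ as subsemigroups of their respective $K_0$-groups and turns the problem into the comparison of two subsemigroups of $K_0(A\otimes A)$.

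Next I would identify the ambient group. Because $A$ is nuclear and satisfies the UCT, the K\"unneth exact sequence applies to $A\otimes A$; since $K_1(A)=0$, the $K_1\otimes K_1$ summand and the relevant Tor-terms vanish, leaving an isomorphism $K_0(A\otimes A)\cong K_0(A)\otimes_{Z}K_0(A)$. Moreover this isomorphism is implemented by the external $K$-theory product, which on projections sends $[p]\otimes[q]$ to $[p\otimes q]$. On the other hand, Lemma \ref{lem:tech} gives $G(V(A)\otimes_{alg}V(A))=G(V(A))\otimes_{Z}G(V(A))=K_0(A)\otimes_{Z}K_0(A)$. Comparing the two computations shows that the enveloping-group map induced by the natural map $t$ is exactly the K\"unneth isomorphism, and in particular is an isomorphism of abelian groups. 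To deduce injectivity of $t$ itself it then suffices to know that $V(A)\otimes_{alg}V(A)$ embeds into its enveloping group, i.e.\ that it is cancellative; I would verify this by passing to the greatest cancellative quotient $C(\,\cdot\,)$ used in the proof of Lemma \ref{lem:tech} and checking that this quotient is already injective into $K_0(A)\otimes_{Z}K_0(A)$, using the finite generation of $K_0(A)$.

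It remains to prove surjectivity, which I expect to be the main obstacle. Unwound, the claim is that every projection class in $(A\otimes A)\otimes\mathbb K$, viewed inside $K_0(A)\otimes_{Z}K_0(A)$, already lies in the subsemigroup generated by the elementary products $[p]\otimes[q]$ with $[p],[q]\in V(A)$; equivalently, the positive cone $V(A\otimes A)$ is generated by products of positive elements of the two factors. This is genuinely delicate, because for an arbitrary ordered abelian group the positive cone of a tensor product can be strictly larger than the subsemigroup generated by products of positive elements, so this step must use the hypotheses in an essential way. Here I would exploit the finite generation of $K_0(A)$ together with the classification/UCT picture: $A$ is an inductive limit of building blocks in which a projection in a tensor product decomposes as a finite sum of elementary tensors of projections, and $V(A\otimes A)$ is the corresponding inductive limit. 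Approximating a given projection in $A\otimes A$ by projections sitting in such building blocks, decomposing the latter as finite sums of products, and using that the connecting maps and the cancellation already established respect these decompositions, one writes the class as $\sum_i[p_i\otimes q_i]$, which is visibly in the range of $t$.

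Combining injectivity and surjectivity then yields that $t$ restricts to an isomorphism $V(A)\otimes_{alg}V(A)\cong V(A\otimes A)$. The decisive and hardest point is the control of surjectivity at the level of the positive cone; the finite generation of $K_0(A)$ and the vanishing of $K_1(A)$, which together collapse the K\"unneth formula to a plain tensor product, are exactly what make the product decomposition of positive classes available.
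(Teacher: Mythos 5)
Your overall reduction coincides with the paper's: stable rank one from \cite{rorSR1} gives cancellation and realizes $V(A)$ and $V(A\otimes A)$ as the positive cones of $K_0(A)$ and $K_0(A\otimes A)$; the K\"unneth sequence with $K_1(A)=0$ collapses to an isomorphism $K_0(A)\otimes_Z K_0(A)\cong K_0(A\otimes A)$ implemented on projections by $[p]\otimes[q]\mapsto[p\otimes q]$; and Lemma \ref{lem:tech} controls the enveloping group of $V(A)\otimes_{alg}V(A)$. You also correctly isolate a point the paper passes over quickly: injectivity requires $V(A)\otimes_{alg}V(A)$ to be cancellative, i.e.\ to embed in its enveloping group, and this is not automatic for tensor products of cancellative semigroups (this is precisely why \cite{grillet2} exists). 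However, you only announce a plan to verify this (``pass to the greatest cancellative quotient and check injectivity using finite generation'') without carrying it out, so that step remains open in your write-up.

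The genuine divergence, and the genuine gap, is at surjectivity. The paper disposes of it by asserting that the K\"unneth map is an order isomorphism and that the positive cone of a tensor product of finitely generated ordered abelian groups is the tensor product of the positive cones, combined with stable finiteness; it uses no inductive limits and no classification. You distrust exactly this assertion and substitute an inductive-limit argument: realize $A$ as a limit of building blocks in which every projection of the tensor square is Murray--von Neumann equivalent to a finite sum of elementary tensors of projections, then use continuity of $V(\cdot)$ and of the semigroup tensor product under limits. That route is complete for AF and AI algebras, where projections over a product of zero- or one-dimensional spectra are equivalent to constant ones. But for the class claimed in the lemma it has two unproved inputs. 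First, to obtain $A$ itself (not merely an algebra with the same Elliott invariant) as such a limit you need a classification theorem for the entire class of simple, separable, unital, nuclear, stably finite, $\mathcal Z$-stable, UCT algebras --- a far heavier input than anything in the hypotheses, and not supplied. Second, the hypotheses only force $K_0(A)$ to be weakly unperforated, not to have Riesz interpolation, so the invariant may be realizable only with higher-dimensional (AH/ASH) building blocks, and there the decomposition fails at finite stages: on $S^2\times S^2$, a rank-two bundle with $c_1=0$ and $c_2$ a generator has its K-theory class in $K^0(S^2)\otimes K^0(S^2)$, yet it is not isomorphic --- only stably isomorphic --- to a direct sum of external tensor products of bundles on $S^2$, and $V$ at a finite stage sees unstable isomorphism. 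Your step ``decompose the projection in a building block and push it forward'' therefore fails as stated and would need an additional stabilization argument (e.g.\ exploiting multiplicities of the connecting maps) before passing to the limit. In short, your instinct that the paper's ordered-group assertion is the delicate point is sound, but the replacement you propose is itself incomplete precisely there.
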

\begin{proof}
We remark that the hypotheses on $A$ imply stable rank 1 by \cite{rorSR1}. Since the tensor product, $A\otimes A,$ also is simple, separable, unital, nuclear, and ${\mathcal Z}$-stable, it follows that the tensor product, $A\otimes A,$ also has stable rank one.
  Our algebra $A$ is assumed to satisfy the UCT, so then $A$ will satisfy the K\"unneth formula for tensor products in $K$-theory \cite{Schochet1982}, see also the partial counterexample due to Elliott in \cite{SchochetRosenberg} which means that we must assume finitely generated $K_0$-group:

  \begin{multline*}0 \rightarrow K_0(A) \otimes K_0(A) \oplus K_1(A) \otimes K_1(A) \rightarrow K_0(A \otimes A) \rightarrow\\ Tor(K_0(A),K_1(A)) \oplus Tor(K_1(A),K_0(A)) \rightarrow 0.\end{multline*}

 It follows from the above exact sequence that $$K_0(A) \otimes K_0(A) \rightarrow K_0(A \otimes A)$$ is an injective map. Since $K_1(A)=\{0\}$ it follows that we have an order-preserving isomorphism $$t:K_0(A) \otimes K_0(A) \rightarrow K_0(A \otimes A).$$ We will show that when restricted to the positive cones, this isomorphism becomes equivalent to the given map.
        Since $A$ has stable rank 1, there is an injective map $i\colon V(A)\rightarrow K_0 (A)$ and $V(A)$ has the cancellation property. Taking algebraic tensor products of semigroups,  we consider $V(A)\otimes_{alg} V(A).$  Taking the (semigroup) tensor product of maps,  we obtain a map $ i\otimes_{alg}i\colon    V(A)\otimes_{alg} V(A) \rightarrow  K_0(A) \otimes_{alg} K_0(A)$ where  $K_0(A) \otimes_{alg} K_0(A)$ is a semigroup tensor product of abelian groups. 
         Moreover, the semigroup tensor product of abelian groups coincides with the usual tensor product of abelian groups (by Proposition 1.4 in \cite{grillet} and the remarks after that Proposition).  We note 
that the map $ i\otimes_{alg}i$ is an injective map (using Lemma \ref{lem:tech}).   Since $A$ is stably finite and the positive cone of a tensor product of finitely generated ordered abelian groups is the tensor product of the positive cones of the ordered abelian groups, it follows that the range of the map $ i\otimes_{alg}i$ is exactly the positive cone of  $K_0(A) \otimes K_0(A).$

        Composing with the above injective map $t$, we obtain an injective map from $V(A)\otimes_{alg} V(A)$ to $K_0(A \otimes A),$ which takes an element $p \otimes_{alg} q$ to $p \otimes q.$ Since $t$ is an order isomorphism, the range of $t\circ (i\otimes_{alg} i)$ is exactly the positive cone of $K_0 (A\otimes A).$ We now observe that the map we have obtained  is in fact equal to the given map, because, as   $A \otimes A$ has stable rank 1 and is stably finite, it follows that $V(A \otimes A)$ is embedded in $K_0(A \otimes A)$ as the positive cone. It then follows that $t\circ (i\otimes_{alg} i),$ which acts on elements by taking $p\otimes_{alg} q$ to $p\otimes q,$ is in fact an injection of $V(A)\otimes_{alg} V(A)$ onto $V(A\otimes A).$ Evidently, this map coincides with the given map.

\end{proof}

\begin{remark} The argument of the above lemma can be adapted to provide a class of counter-examples to the possible surjectivity, even after closure, of the tensor product map $t\colon Cu(A)\otimes_{alg} Cu(B) \rightarrow Cu(A\otimes B).$ If an algebra is in the UCT class, and the $K$-theory groups are such that the last term in the K\"unneth sequence of \cite{Schochet1982} does not vanish, then we see that the first map in the short exact sequence
  \begin{multline*}0 \rightarrow K_0(A) \otimes K_0(A) \oplus K_1(A) \otimes K_1(A) \rightarrow K_0(A \otimes A) \rightarrow\\ Tor(K_0(A),K_1(A)) \oplus Tor(K_1(A),K_0(A)) \rightarrow 0\end{multline*}
	will not be surjective. But then, in particular, the tensor product map from $K_0(A) \otimes K_0(A)$ to $K_0 (A\otimes A)$ will not be surjective. Hence the tensor product map at the level of projection semigroups will not be surjective either, and this is an obstacle to the surjectivity of the tensor product map at the level of Cuntz semigroups (using the result that the tensor product of elements that are equivalent to a projection is an element that is equivalent to a projection).
\end{remark}
In the next theorem, the range is dense in a sense discussed in Section 4.
 \begin{theorem}
 If a C*-algebra $A$ is simple, separable, unital, stably finite, nuclear, $\mathcal{Z}$-stable, satisfies the UCT, with finitely generated $K_0(A)$, and has $K_1(A)=\{0\},$ then the natural semigroup map 
 $$t: Cu(A) \otimes_{alg} Cu(A)  \rightarrow Cu(A \otimes A)$$ given by 
$t([a]\otimes [b])=[a \otimes b]$ has dense range (under pointwise suprema).\label{th:surjectivity}
 \end{theorem}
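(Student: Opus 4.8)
The plan is to split $Cu(A\otimes A)$ using Theorem~\ref{th:representations2}, which applies to $A\otimes A$ because the latter is again simple, separable, unital, stably finite, nuclear and $\mathcal{Z}$-stable. This gives $Cu(A\otimes A)\cong V(A\otimes A)\sqcup Lsc(T(A\otimes A),(0,\infty])$, and since every element lies in exactly one of the two summands it is enough to exhibit each projection class, and each purely positive element, as a pointwise supremum of elements of the range of $t$. I would treat the two kinds of element separately and then observe that, as they exhaust $Cu(A\otimes A)$, density on each kind yields the dense range overall.

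For a projection class there is nothing to approximate: Lemma~\ref{lem:monoids} identifies $t$ with an isomorphism $V(A)\otimes_{alg}V(A)\cong V(A\otimes A)$, so every element of $V(A\otimes A)$ is literally $t([p]\otimes[q])=[p\otimes q]$ and hence already lies in the range of $t$. This is the step that uses the finite generation of $K_0(A)$ and the vanishing of $K_1(A)$, through the K\"unneth formula.

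For the purely positive part I would reduce to Lemma~\ref{th:surjective.function.semigroup}, whose hypotheses hold once we know that the purely positive part of $Cu(A\otimes A)$ is $Lsc(T(A)\times T(A),(0,\infty])$. Granting this, the argument runs as in that lemma. Given a continuous $f$, Lemma~\ref{lem:pou.affine} furnishes an increasing sequence of biaffine sums $F_n=\sum_k f_k(\tau)g_k(\sigma)$, with $f_k,g_k$ continuous affine and strictly positive (the positive scalars being absorbed into the factors), approximating $f$ from below. By Theorem~\ref{th:representations2} applied to $A$, each factor is a dimension function of a purely positive element, say $f_k=i([a_k])$ and $g_k=i([b_k])$; and since $(a\otimes b)^{1/n}=a^{1/n}\otimes b^{1/n}$ one has $d_{\tau\otimes\sigma}(a\otimes b)=d_\tau(a)\,d_\sigma(b)$, so that $F_n$ is exactly the function $i\bigl(t(\sum_k [a_k]\otimes[b_k])\bigr)$ evaluated on product traces. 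Hence each $F_n$ comes from the range of $t$, and Proposition~\ref{prop:countable.sup.of.cont} upgrades the conclusion from continuous $f$ to an arbitrary lower semicontinuous $f$.

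The crux, and the step I expect to be the main obstacle, is precisely the identification of the purely positive part of $Cu(A\otimes A)$ with $Lsc(T(A)\times T(A),(0,\infty])$ that is needed to launch Lemma~\ref{th:surjective.function.semigroup}; Theorem~\ref{th:representations2} only delivers $Lsc(T(A\otimes A),(0,\infty])$. Concretely one must show that an affine lower semicontinuous function on the simplex $T(A\otimes A)$ is determined by its restriction to the product traces $\tau\otimes\sigma$ — equivalently, that the extreme tracial states of $A\otimes A$ are exactly the products of extreme tracial states of the factors, a fact available for nuclear $A$. This identification is what permits pulling $f$ back to a biaffine function on $T(A)\times T(A)$, applying the partition-of-unity estimate there, and then transporting both the inequality $F_n\le f$ and the convergence $F_n\uparrow f$ to all of $T(A\otimes A)$ by the barycentric integral formula for affine functions on a simplex. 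With this in hand, the projection and purely positive cases assemble into the asserted dense range under pointwise suprema.
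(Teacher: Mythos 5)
Your proposal follows essentially the same route as the paper's own proof: decompose $Cu(A\otimes A)$ via the representation theorem applied to $A\otimes A$, handle $V(A\otimes A)$ by Lemma \ref{lem:monoids}, handle continuous purely positive elements by Lemma \ref{th:surjective.function.semigroup} (via the biaffine approximation of Lemma \ref{lem:pou.affine}), and pass to arbitrary lower semicontinuous elements by pointwise suprema using Proposition \ref{prop:countable.sup.of.cont}. The only notable difference is that you explicitly flag and justify the identification of the purely positive part of $Cu(A\otimes A)$ with $Lsc(T(A)\times T(A),(0,\infty])$ (through factorization of extreme traces of $A\otimes A$), a hypothesis the paper simply absorbs into Lemma \ref{th:surjective.function.semigroup} without verification, so your write-up is, if anything, more careful on that point.
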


\begin{proof} Under these hypotheses, any element $x$ in $Cu(A \otimes A)$ is either in $V(A \otimes A)$ or in $Lsc(T(A \otimes A),(0,\infty]),$ by, for example, Theorem \ref{th:representations}. 
In the case that $x$ is in  $V(A \otimes A),$ Lemma \ref{lem:monoids} shows surjectivity of $V(A)\otimes_{alg} V(A)$ onto $V(A\otimes A).$ 
The traces are a metrizable simplex \cite{EH,Brown}, so
in the case that $x$ is a continuous function in $Lsc(T(A \otimes A),(0,\infty]),$ by  Theorem 
\ref{th:surjective.function.semigroup}, there is an element of  $Lsc(T(A),(0,\infty])\otimes_{alg}Lsc(T(A),(0,\infty])$ that (uniformly) approximates $x.$ Since any lower semicontinuous function is a supremum of continuous functions, this completes the proof.
\end{proof}
\begin{remark} We mention that the above result applies in the finite-dimensional case, in other words, the case of matrix algebras, and in this case, dense range is equivalent to surjectivity.\end{remark}

\section{Deducing surjectivity results}

To obtain surjectivity results in greater generality, some form of completion operation must be introduced, and we do this next.

The natural topology on the projection-class elements of the Cuntz semigroup is the discrete topology. Thus, we were able to handle projection-class elements without explicit reference to a topology. However, when considering the tensor product of purely positive elements, it appears inevitable that we must consider topologies, which means that we must consider completions of the algebraic tensor product with respect to a specified topology.

 Our approach is based on a very general construction of the so-called inductive (or injective) tensor product of topological vector spaces, due to   Gro\-then\-dieck, see \cite[d\'efinition 3]{grot}. Thus, we form the algebraic tensor product of abelian semigroups, see \cite{grillet}, and then view  elements of $Cu(A)\otimes_{alg} Cu(B)$ as   functions on $D(A)\times D(B),$ where $D(A)$ and $D(B)$ denote the dimension functions on the Cuntz semigroup(s). The inductive topology is the (initial) topology induced by this embedding. 
We then take the topological completion of $Cu(A)\otimes_{alg} Cu(B)$ with respect to this topology (see \cite[ex. 6.L, and pp. 195-6]{kelley} for information on completions). As previously mentioned, we only need to perform the above construction on the set of those elements whose image under the tensor product map is purely positive: this set will be further  characterized later. For brevity we refer to these elements as the purely positive elements. We have that for the purely positive elements of
 $Cu(A)\otimes_{alg} Cu(B)$, an increasing sequence $x_n$ converges (pointwise) to an element of the completion if and only if $(d_1, d_2)(x_n)$ converges for all $d_1\in D(A)$ and $d_2 \in D(B).$ The limit of the sequence exists in the completion, and we define the inductive tensor product of Cuntz semigroups, denoted $Cu(A) \otimes Cu(B),$ to be $Cu(A)\otimes_{alg} Cu(B)$ augumented by the set of all such limits.  

We also have the minimal embedding tensor product, given as follows. Consider the natural tensor product map $t\colon Cu(A)\otimes_{alg} Cu(B)\rightarrow Cu(A\otimes_{min} B).$  This map induces a uniformizable topology on its domain (sometimes called the inital topology). Taking, then, the  completion of the domain with respect to this topology, and proceeding as in the previous paragraph, we obtain the minimal embedding tensor product, $Cu(A)\otimes_{min} Cu(B).$ If, in the above, we replace   $Cu(A\otimes_{min} B)$ by $Cu(A\otimes_{max} B),$ then we obtain the maximal embedding tensor product, denoted $Cu(A)\otimes_{max} Cu(B).$ 


\begin{prop} If the dimension functions are determined by traces, and if the normalized traces  $T(A\otimes_{min} B)$ are isomorphic to $T(A)\times T(B),$ then the inductive tensor product $Cu(A)\otimes Cu(B)$ coincides with the minimal embedding tensor product $Cu(A)\otimes_{min} Cu(B).$ \label{prop:same.topologies}\end{prop}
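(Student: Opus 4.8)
The plan is to reduce the equality of the two completions to the equality of the two topologies being completed, and then to show that those topologies are in fact induced by one and the same family of functionals on $Cu(A)\otimes_{alg} Cu(B)$. Both topologies are, by construction, initial (hence uniformizable) topologies. The inductive topology is the initial topology for the family of evaluation maps $\{d_1\otimes d_2 : d_1\in D(A),\, d_2\in D(B)\}$, acting by $(d_1\otimes d_2)(\sum_i [a_i]\otimes [b_i]) = \sum_i d_1([a_i])\, d_2([b_i])$ (this is biadditive, so it descends through the bilinearity relations defining $\otimes_{alg}$). The minimal embedding topology is the initial topology for the family $\{d\circ t : d\in D(A\otimes_{min} B)\}$, where $t$ is the natural tensor product map into $Cu(A\otimes_{min} B)$. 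Using the first hypothesis, that dimension functions are determined by traces, I would replace $D(A)$, $D(B)$ and $D(A\otimes_{min} B)$ by the trace-induced families $\{d_{\tau_1}:\tau_1\in T(A)\}$, $\{d_{\tau_2}:\tau_2\in T(B)\}$ and $\{d_{\tau}:\tau\in T(A\otimes_{min} B)\}$; the inductive topology is then induced by $\{d_{\tau_1}\otimes d_{\tau_2}:(\tau_1,\tau_2)\in T(A)\times T(B)\}$.

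The key step is a multiplicativity property of trace-induced dimension functions on elementary tensors: for $\tau_1\in T(A)$, $\tau_2\in T(B)$ and positive elements $a,b$ I would prove $d_{\tau_1\otimes\tau_2}([a\otimes b]) = d_{\tau_1}([a])\, d_{\tau_2}([b])$. This rests on the functional-calculus identity $(a\otimes b)^{1/n}=a^{1/n}\otimes b^{1/n}$ (which holds because $a^{1/n}\otimes b^{1/n}$ is a positive $n$-th root of $a\otimes b$, and positive roots are unique) together with the multiplicativity of the product trace on elementary tensors, so that $(\tau_1\otimes\tau_2)\big((a\otimes b)^{1/n}\big)=\tau_1(a^{1/n})\,\tau_2(b^{1/n})$; passing to the limit $n\to\infty$ and using that both monotone limits exist gives the claimed equality. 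By biadditivity this extends to all of $Cu(A)\otimes_{alg} Cu(B)$, yielding $d_{\tau_1\otimes\tau_2}\circ t = d_{\tau_1}\otimes d_{\tau_2}$ as functionals.

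It then remains to invoke the second hypothesis, $T(A\otimes_{min} B)\cong T(A)\times T(B)$, identifying each trace $\tau$ on $A\otimes_{min} B$ with a product trace $\tau_1\otimes\tau_2$, $(\tau_1,\tau_2)\in T(A)\times T(B)$. Combined with the previous step, the family $\{d_\tau\circ t:\tau\in T(A\otimes_{min} B)\}$ defining the minimal embedding topology becomes exactly $\{d_{\tau_1}\otimes d_{\tau_2}:(\tau_1,\tau_2)\in T(A)\times T(B)\}$, which is precisely the family defining the inductive topology. Two initial uniformizable topologies induced by an identical family of functionals coincide, and uniform structures that agree have the same completion; hence $Cu(A)\otimes Cu(B)=Cu(A)\otimes_{min} Cu(B)$.

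The point requiring the most care is matching the abstract simplex isomorphism $T(A\otimes_{min} B)\cong T(A)\times T(B)$ with the product-trace structure, i.e. verifying that the isomorphism is implemented on elementary tensors by $\tau=\tau_1\otimes\tau_2$ with $\tau_1=\tau|_{A\otimes 1}$, $\tau_2=\tau|_{1\otimes B}$, so that the multiplicativity computation of the second paragraph genuinely applies to every $\tau$. One must also handle the extended-real values in $(0,\infty]$ carefully; confining attention to the purely positive elements, where the completion is actually performed, guarantees that the factors $d_{\tau_2}([b])$ are strictly positive and so avoids any $0\cdot\infty$ ambiguity in the limit of products.
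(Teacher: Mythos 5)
Your proposal is correct and follows essentially the same route as the paper: both reduce the claim to identifying the dual family $D(A\otimes_{min}B)$ with $D(A)\times D(B)$ (via the two hypotheses) and then observing that the two completions are built from the same family of functionals. Your explicit verification of the intertwining identity $d_{\tau_1\otimes\tau_2}\circ t = d_{\tau_1}\otimes d_{\tau_2}$ via $(a\otimes b)^{1/n}=a^{1/n}\otimes b^{1/n}$ is a detail the paper's proof asserts implicitly ("but then the two constructions considered above will coincide") rather than proves, so your write-up is a more complete version of the same argument.
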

\begin{proof} 
The inductive tensor product is given by adding to the algebraic tensor product the limits of increasing sequences $(x_n)$ that are such that  $(d_1 \times d_2)(x_n)$ converges  for each $d_1\in D(A)$ and $d_2 \in D(B).$ (We need only consider purely positive elements.)
On the other hand, the minimal embedding tensor product is given by adding to the algebraic tensor product the limits of increasing sequences $(x_n)$ that are such that $d(t(x_n))$ converges for each $d \in D(A\otimes_{min} B),$ again for purely positive elements.
The hypothesis implies that $D(A\otimes_{min} B)$ is isomorphic to $D(A)\times D(B),$ but then the two constructions considered above will coincide.   
\end{proof}


The results of the previous sections are  valid  within the class of algebra where the purely positive elements of $Cu(A \otimes A)$ are isomorphic to the set of lower semicontinuous, affine, strictly positive functions on the tracial state space of the unital C*-algebra $A.$ 

In general, of course, since we regard $Cu(A)$ as being topologically a disjoint union of two sets, namely the set of elements having the same class as a projection, and the set of purely positive elements, the tensor product of Cuntz semigroups will \textit{a priori} be a union of four sets. As explained above, the topological issues only arise when considering the component $Cu(A)|_{\mbox{\tiny pure}}\otimes Cu(B)|_{\mbox{\tiny pure}}.$ 
In general, when considering the tensor product map $t\colon Cu(A) \otimes Cu(A) \rightarrow Cu(A \otimes A)$ the following three cases  appear: \\
 \textbf{Case 1:} projection elements tensored with projection elements,\\
 \textbf{Case 2:} purely positive tensored with purely positive elements, and\\
\textbf{ Case 3:} projection elements tensored with purely positive elements.
We now summarize our results on algebraic tensor products, for each of these three cases, in the setting of completed tensor products.
\begin{theorem} Suppose that $A$ is a  C*-algebra which is unital, simple, separable, stably finite, nuclear, $\mathcal{Z}$-stable, satisfies the UCT, with finitely generated $K_0(A)$, has $K_1(A)=\{0\}$ and  satisfies the Blackadar--Handelman conjecture. Then each of the maps \begin{eqnarray*}
t:Cu(A)|_{\mbox{\tiny pure}} \otimes Cu(A)|_{\mbox{\tiny pure}} \rightarrow Cu(A \otimes A)\\
t:Cu(A)|_{\mbox{\tiny pure}} \otimes Cu(A)|_{\mbox{\tiny proj}} \rightarrow Cu(A \otimes A)\\
t:Cu(A)|_{\mbox{\tiny proj}}  \otimes Cu(A)|_{\mbox{\tiny pure}} \rightarrow Cu(A \otimes A)\\
t:Cu(A)|_{\mbox{\tiny proj}}  \otimes Cu(A)|_{\mbox{\tiny proj}}  \rightarrow Cu(A \otimes A)\\
\end{eqnarray*}
is injective. The first three of these maps have range contained in the purely positive elements of $Cu(A \otimes A),$ the last map has range contained in the projection-class elements of $Cu(A \otimes A).$ The domains are injective tensor products of Cuntz semigroups. The first and last of the above maps are surjective onto their range. 
\label{th:4components} 
\end{theorem}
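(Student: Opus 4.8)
The plan is to assemble the four assertions from results already in hand, separating the two ``diagonal'' maps from the two ``mixed'' maps and treating the range, injectivity, and surjectivity claims in turn. First I would dispose of the projection$\otimes$projection map: Lemma~\ref{lem:monoids} already gives that the algebraic map $V(A)\otimes_{alg}V(A)\to V(A\otimes A)$ is an isomorphism, and, as noted at the start of Section~4, the natural topology on the projection-class elements is discrete, so the completion is trivial and the injective tensor product of the projection parts coincides with the algebraic one. Hence $t$ on proj$\otimes$proj is an isomorphism onto $V(A\otimes A)$, giving injectivity together with surjectivity onto the projection-class elements.

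Next I would settle the range claims using the spectral criterion of Ara, Perera and Toms \cite[Proposition 2.23]{APT} recalled before Lemma~\ref{lem:monoids}: in stable rank one (which $A$, and hence $A\otimes A$, enjoys), a Cuntz class is projection-class exactly when $0$ is absent from, or isolated in, the spectrum. If at least one of $a,b$ is purely positive then $0$ is a non-isolated point of its spectrum; since $\mathrm{sp}(a\otimes b)$ contains all products $\lambda\mu$ with $\lambda\in\mathrm{sp}(a)$ and $\mu\in\mathrm{sp}(b)$, one obtains a sequence of distinct nonzero values tending to $0$, so $a\otimes b$ is again purely positive. This handles the three cases involving a purely positive factor and places their ranges inside $Lsc(T(A\otimes A),(0,\infty])$, consistently with the previous paragraph.

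For injectivity of the three maps involving a purely positive factor I would invoke Proposition~\ref{prop:same.topologies}. Because the Blackadar--Handelman conjecture is assumed for $A$ (and holds equally for $A\otimes A$, which satisfies the same hypotheses), every dimension function is a weak limit of trace-induced ones; and by the trace factorization $T(A\otimes A)\cong T(A)\times T(A)$ used throughout Section~3, the trace-induced dimension functions on $A\otimes A$ are precisely the products $d_1\times d_2$. Thus the initial topology defining the injective tensor product through evaluation on $D(A)\times D(A)$ agrees with the topology pulled back along $t$ from $Cu(A\otimes A)$. As the minimal-embedding tensor product is by construction the completion with respect to this pulled-back topology, and so maps injectively into $Cu(A\otimes A)$, the coincidence of the two topologies supplied by Proposition~\ref{prop:same.topologies} yields injectivity of $t$ on each injective tensor product. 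Surjectivity of the first map is then immediate from Theorem~\ref{th:surjectivity}: the algebraic map already has dense range under pointwise suprema onto $Lsc(T(A\otimes A),(0,\infty])$, and the injective tensor product is exactly the completion obtained by adjoining such suprema, so the extended map is surjective onto the purely positive part, which is its range.

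The hard part will be the injectivity argument, and specifically the justification that no two distinct elements of the injective tensor product are collapsed by $t$. This amounts to showing that a lower semicontinuous function on $D(A)\times D(A)$ arising from the completion is determined by its restriction to the dense subset $T(A)\times T(A)$, which is precisely where the Blackadar--Handelman density hypothesis and Proposition~\ref{prop:same.topologies} must be applied with care rather than routinely; the coincidence of the inductive and minimal-embedding topologies is what converts mere density into the required separation.
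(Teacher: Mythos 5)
Your proposal follows essentially the same route as the paper's own proof: the spectral criterion on $\mathrm{Sp}(a\otimes b)$ for the range claims, Lemma~\ref{lem:monoids} for the projection$\otimes$projection part, uniform approximation of continuous functions followed by pointwise suprema for surjectivity onto the purely positive part, and the Blackadar--Handelman density of trace-induced dimension functions together with the factorization $T(A\otimes A)\cong T(A)\times T(A)$ for injectivity. The only differences are presentational --- the paper proves injectivity directly, by evaluating at pairs of trace-induced dimension functions and then invoking Blackadar--Handelman to pass to arbitrary pairs, rather than routing through Proposition~\ref{prop:same.topologies}, and it asserts at exactly the point you flag as ``the hard part'' (that agreement on the dense set of trace pairs forces equality in the completed tensor product) the same conclusion, with the same level of detail you give.
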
    
\begin{proof} We first show that the behaviour with respect to purely positive and projection-class elements is as claimed. A positive element has the class of a projection if and only if the spectrum of the element has a spectral gap at zero, and the spectrum of $a\otimes b$ is given by the set of all pairwise products $\{\lambda\mu\, |\, \lambda\in \mbox{Sp}(a),  \mu\in \mbox{Sp}(b) \}.$ It can thus be seen that $a\otimes b$ has the class of a projection if and only if both $a$ and $b$ have the class of a projection. 

We now consider surjectivity. Our Lemma \ref{lem:monoids} gives surjectivity onto the projection elements of $Cu(A \otimes A),$ and 
Lemma \ref{lem:pou.affine} provides an approximation (from below) of purely positive elements of $Cu(A)\otimes Cu(A)$ that are continuous when viewed as functions  on $D(A)\times D(A).$ More precisely,  the image of $Cu(A) \otimes_{{alg}} Cu(A)$ is dense, with respect to the supremum norm over the (compact) space $D(A)\times D(A),$ within the continuous positive affine functions on   $D(A)\times D(A).$ It follows that after taking completions, the range of the tensor product map $t\colon Cu(A)\otimes Cu(A) \rightarrow Cu(A\otimes A)$ contains at least the  purely positive elements corresponding to continuous functions.
Since any lower semicontinuous function is a supremum of continuous functions, this implies surjectivity.

We now consider the injectivity of these maps. Suppose that $x, x' \in Cu(A) \otimes Cu(A)$ are such that $t(x)=t(x').$ Consider first the case where $t(x)$ and $t(x')$ belong to the purely positive part of $Cu(A) \otimes Cu(A).$ We have that $d \circ t(x)=d\circ t(x')$ for all dimension functions $d$  on $A \otimes A$.  Choose a dimension function $d$ on $A \otimes A$ that comes from a tensor product $\tau_1 \otimes \tau_2$ of traces on $A$. Thus, if $t$ is the map taking $[a] \otimes [b]$ to $[a \otimes b]$, and $d(t(x))=d(t(x'))$, we deduce that $(d_1 \otimes d_2)(x)=(d_1 \otimes d_2)(x')$ where $d_1$ and $d_2$ are the dimension states on $Cu(A)$ that come from $\tau_1$ and $\tau_2$ respectively. Since $A$ satisfies the Blackadar-Handelman conjecture, this means that $d_1$ and $d_2$ can be chosen to be  equal to any  two dimension states of $A.$  But this means that $x$ and $x'$ are equal in the tensor product $Cu(A) \otimes Cu(A).$ The two mixed cases are similar. The case of projections follows from the fact that the natural map from $V(A)\otimes V(A)$ to  $V(A\otimes A)$ is an injection, by Lemma \ref{lem:monoids}.
\end{proof}

Combining the above with Theorem \ref{th:surjectivity}, we have the corollary:
\begin{corollary} If an unital C*-algebra $A$ is simple, separable, stably finite, nuclear, $\mathcal{Z}$-stable, satisfies the UCT, with finitely generated $K_0(A)$, has $K_1(A)=\{0\}$ then the natural tensor product map 
 $$t: Cu(A) \otimes Cu(A)  \rightarrow Cu(A \otimes A)$$ given by 
$t([a],[b])=[a \otimes b]$ is surjective, and becomes an isomorphism when restricted to 
$$Cu(A)|_{\mbox{\tiny pure}} \otimes Cu(A)|_{\mbox{\tiny pure}} \sqcup Cu(A)|_{\mbox{\tiny proj}}  \otimes Cu(A)|_{\mbox{\tiny proj}}.$$ 
\end{corollary}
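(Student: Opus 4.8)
The plan is to obtain the Corollary by assembling the two ingredients already in hand, Theorem \ref{th:surjectivity} and Theorem \ref{th:4components}, and exploiting the disjoint-union decomposition of $Cu(A\otimes A)$ into its projection-class part $V(A\otimes A)$ and its purely positive part $Lsc(T(A\otimes A),(0,\infty])$. First I would verify that the hypotheses of Theorem \ref{th:4components} are in force: the only extra assumption there is the Blackadar--Handelman conjecture, but for the class under consideration this holds automatically by \cite[page 426]{Toms09}, as recorded earlier in the paper, so both Theorem \ref{th:surjectivity} and Theorem \ref{th:4components} apply verbatim.

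For surjectivity I would argue componentwise. The spectral criterion recalled in the proof of Theorem \ref{th:4components} shows that $a\otimes b$ is projection-class precisely when both $a$ and $b$ are; hence the projection-class component maps into $V(A\otimes A)$ and every other component maps into the purely positive part. Lemma \ref{lem:monoids} then gives that $Cu(A)|_{\mbox{\tiny proj}}\otimes Cu(A)|_{\mbox{\tiny proj}}$ surjects onto $V(A\otimes A)$, while Theorem \ref{th:4components} (built on Lemma \ref{lem:pou.affine} together with the passage to pointwise suprema that defines the completed tensor product) gives that $Cu(A)|_{\mbox{\tiny pure}}\otimes Cu(A)|_{\mbox{\tiny pure}}$ surjects onto the purely positive elements. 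Since these two target sets exhaust $Cu(A\otimes A)$, the full map $t$ is surjective.

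For the isomorphism claim I would restrict to the sub-semigroup $Cu(A)|_{\mbox{\tiny pure}}\otimes Cu(A)|_{\mbox{\tiny pure}}\sqcup Cu(A)|_{\mbox{\tiny proj}}\otimes Cu(A)|_{\mbox{\tiny proj}}$ and invoke, for each summand separately, the injectivity and the surjectivity-onto-range clauses of Theorem \ref{th:4components}: the first summand maps bijectively onto the purely positive elements and the second maps bijectively onto $V(A\otimes A)$. Because these two images are disjoint and together fill $Cu(A\otimes A)$, the restricted map is a bijection; being a semigroup homomorphism compatible with the order (indeed with suprema), it is an order isomorphism.

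The point that needs care --- and the reason the isomorphism is asserted only on the restricted domain rather than on all of $Cu(A)\otimes Cu(A)$ --- is the behaviour of the two mixed components $Cu(A)|_{\mbox{\tiny pure}}\otimes Cu(A)|_{\mbox{\tiny proj}}$ and $Cu(A)|_{\mbox{\tiny proj}}\otimes Cu(A)|_{\mbox{\tiny pure}}$. By the same spectral criterion these also land among the purely positive elements, so their images overlap that of the pure--pure component; retaining them would destroy global injectivity even though each mixed map is individually injective, and discarding them is precisely what turns the restricted map into a clean bijection. A secondary subtlety I would flag is that ``surjective'' is meant in the completed sense of Section~4: the density supplied by Lemma \ref{lem:pou.affine} yields only continuous functions, and genuine surjectivity onto \emph{all} lower semicontinuous functions is recovered only after adjoining pointwise suprema, which is legitimate exactly because the inductive tensor product is by construction closed under such suprema.
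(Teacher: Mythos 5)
Your proposal is correct and takes essentially the same route as the paper, which derives this corollary with no separate proof beyond the phrase ``combining the above with Theorem \ref{th:surjectivity}'': that is, exactly your assembly of Theorem \ref{th:4components} (componentwise injectivity and surjectivity onto the projection-class and purely positive parts, with Blackadar--Handelman automatic for this class via \cite[page 426]{Toms09}) with Theorem \ref{th:surjectivity}. Even your closing observation about the mixed components landing among the purely positive elements and thereby obstructing global injectivity mirrors the remark the paper makes immediately after the corollary.
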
 
 
Thus the mixed elements of the tensor product are evidently an obstacle to injectivity of the unrestricted tensor product map. This is because, by Theorem \ref{th:surjectivity}, every purely positive element of  $Cu(A \otimes A)$ is the image of some element of $Cu(A)|_{\mbox{\tiny pure}} \otimes Cu(A)|_{\mbox{\tiny pure}},$  but on the other hand, as shown in the proof of Theorem \ref{th:4components}, the image of a purely positive element tensored with a projection-class element is a purely positive element. It thus  follows that in this case the tensor product map is not injective. Therefore, it is quite rare for the tensor product map to be an isomorphism. 

We note the following further corollary:
 \begin{corollary} If an unital C*-algebra $A$ is simple, stably projectionless,  stably finite, nuclear, $\mathcal{Z}$-stable, satisfies the UCT, with finitely generated $K_0(A)$, and has $K_1(A)=\{0\}$  then the natural tensor product map 
 $$t: Cu(A) \otimes Cu(A)  \rightarrow Cu(A \otimes A)$$ given by 
$t([a]\otimes [b])=[a \otimes b]$ is an isomorphism. 
\end{corollary}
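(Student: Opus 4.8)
The plan is to exploit the fact that the hypothesis \emph{stably projectionless} forces the projection monoid $V(A)$ to be trivial, so that the four-fold case analysis of Theorem \ref{th:4components} collapses to a single surviving component. First I would record that, since $A$ has no nonzero projections in $A\otimes\mathbb{K}$, we have $V(A)=\{0\}$, and hence, by the disjoint-union description of Theorem \ref{th:representations2} (in its nonunital form, via \cite{ERS,TT}, with $T(A)$ taken to be a compact simplicial base of the tracial cone as in \cite[Prop.~3.4]{TT}), every nonzero element of $Cu(A)$ is purely positive. In other words $Cu(A)=Cu(A)|_{\mbox{\tiny pure}}$ up to the zero element, so in the decomposition into four components only the first, $Cu(A)|_{\mbox{\tiny pure}}\otimes Cu(A)|_{\mbox{\tiny pure}}$, is nontrivial.

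Next I would argue that $A\otimes A$ is itself stably projectionless. This follows from the spectral criterion established in the proof of Theorem \ref{th:4components}: a positive element $a\otimes b$ has the class of a projection precisely when both $a$ and $b$ do, because the spectrum of $a\otimes b$ is the set of pairwise products $\{\lambda\mu \mid \lambda\in\mathrm{Sp}(a),\ \mu\in\mathrm{Sp}(b)\}$, which exhibits a gap at $0$ only when both spectra do. Since $A$ has no nonzero projection-class elements, neither does $A\otimes A$. Equivalently, Lemma \ref{lem:monoids} gives $V(A\otimes A)\cong V(A)\otimes_{alg}V(A)=\{0\}$. Either way, $Cu(A\otimes A)=Cu(A\otimes A)|_{\mbox{\tiny pure}}$. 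With both tensor factors and the target purely positive, the map $t$ coincides with the first map listed in Theorem \ref{th:4components}, which that theorem asserts is injective and surjective onto the purely positive elements of $Cu(A\otimes A)$; as these now exhaust $Cu(A\otimes A)$, the map $t$ is a bijection. Being an order- and addition-preserving morphism whose underlying map is bijective (with the inverse automatically order-preserving on a purely positive Cuntz semigroup), I would conclude that $t$ is an isomorphism.

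I expect the one point genuinely requiring care to be surjectivity onto \emph{all} of $Cu(A\otimes A)$, including those purely positive elements corresponding to genuinely lower semicontinuous, non-continuous functions. Here I would lean on the completion built in Section~4: Lemma \ref{lem:pou.affine} produces continuous affine approximants from below, Theorem \ref{th:surjectivity} places every \emph{continuous} purely positive element in the (closure of the) range, and Proposition \ref{prop:countable.sup.of.cont} writes an arbitrary element of $Lsc(T(A\otimes A),(0,\infty])$ as a pointwise supremum of continuous functions, so that after passing to the inductive tensor product these suprema lie in the range of $t$. A secondary and purely cosmetic matter is that the stated word ``unital'' is incompatible with ``stably projectionless''; I would read the intended hypotheses as separability together with stable projectionlessness, under which the nonunital representation theorem supplies the identification $Cu(A)\cong Lsc(T(A),(0,\infty])$ used throughout, and the Blackadar--Handelman conjecture (assumed for this class, and invoked via Theorem \ref{th:4components}) guarantees the injectivity half of the argument.
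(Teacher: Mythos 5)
Your proposal is correct and matches the paper's intended derivation: the paper states this corollary without a separate proof, as an immediate consequence of Theorem \ref{th:4components} and the preceding discussion, once stable projectionlessness makes $V(A)$ trivial and (via Lemma \ref{lem:monoids}) makes $V(A\otimes A)$ trivial, so that only the component $Cu(A)|_{\mbox{\tiny pure}}\otimes Cu(A)|_{\mbox{\tiny pure}}$ survives and the map $t$ on it is injective and surjective onto all of $Cu(A\otimes A)$ --- exactly your argument, including the (correct) observation that ``unital'' and ``stably projectionless'' are incompatible as stated. One small caution: your first argument that $A\otimes A$ is stably projectionless (the spectral criterion) only rules out \emph{elementary tensors} $a\otimes b$ being projection-class, not arbitrary positive elements of $(A\otimes A)\otimes\mathbb{K}$, so it is your second argument via Lemma \ref{lem:monoids} that actually carries the step, which is also how the paper does it.
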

A class of algebras to which our results can be applied is the class of simple AT-algebras with trivial $K_1$-group (or, equivalently, AI algebras). 
 \begin{corollary} If $A$ is a simple AI-algebra with finitely generated K-theory, then the natural tensor product map 
 $$t: Cu(A) \otimes Cu(A)  \rightarrow Cu(A \otimes A)$$ given by 
$t([a],[b])=[a \otimes b]$ is surjective, and becomes an isomorphism when restricted to 
$$Cu(A)|_{\mbox{\tiny pure}} \otimes Cu(A)|_{\mbox{\tiny pure}} \sqcup Cu(A)|_{\mbox{\tiny proj}}  \otimes Cu(A)|_{\mbox{\tiny proj}}.$$ 
 \end{corollary}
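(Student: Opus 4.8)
The plan is to show that a simple AI-algebra with finitely generated $K$-theory satisfies every hypothesis of the preceding corollary, and then to invoke that corollary directly. Recall that an AI-algebra is an inductive limit of finite direct sums of interval algebras $C([0,1],M_n)$. From this description several of the required properties are immediate: such an algebra is separable, it is nuclear (being subhomogeneous, hence type~I), and it lies in the bootstrap class, so that it satisfies the UCT. Being an inductive limit of stably finite algebras, it is itself stably finite.

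Next I would dispose of the $K$-theoretic hypotheses. Each building block $C([0,1],M_n)$ is homotopy equivalent to $M_n$, so it has vanishing $K_1$; since $K_1$ is continuous with respect to inductive limits, any AI-algebra has $K_1(A)=\{0\}$, exactly as required. The standing assumption that $A$ has finitely generated $K$-theory then supplies the remaining hypothesis that $K_0(A)$ be finitely generated. The only substantive point left is $\mathcal{Z}$-stability, and here I would argue by a dichotomy. If $A$ is finite-dimensional it is a matrix algebra, and the conclusion is already contained in the finite-dimensional remark following Theorem \ref{th:surjectivity}, where dense range coincides with surjectivity. If $A$ is infinite-dimensional, then, being a simple AI-algebra, its building spaces have dimension at most one, so it has slow dimension growth and is therefore $\mathcal{Z}$-stable.

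Once $\mathcal{Z}$-stability is established, all of the hypotheses of the preceding corollary are in force, and the assertion that $t$ is surjective and restricts to an isomorphism on $Cu(A)|_{\mbox{\tiny pure}} \otimes Cu(A)|_{\mbox{\tiny pure}} \sqcup Cu(A)|_{\mbox{\tiny proj}} \otimes Cu(A)|_{\mbox{\tiny proj}}$ follows at once by applying that corollary (for a unital $A$). The main obstacle I anticipate is the $\mathcal{Z}$-stability step, which relies on the structure theory of simple AI-algebras via their slow dimension growth, together with the need to treat the degenerate finite-dimensional case separately; the remaining verifications are routine consequences of the definition of an AI-algebra.
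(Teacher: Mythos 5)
Your proof is correct and takes essentially the same route as the paper, which states this corollary without proof as an immediate consequence of the preceding corollary: one simply verifies that a simple (unital) AI-algebra satisfies all of its hypotheses (separability, nuclearity, UCT, stable finiteness, trivial $K_1$, and $\mathcal{Z}$-stability, the last via the structure theory of simple AI-algebras). Your explicit dichotomy for the finite-dimensional case is a worthwhile extra precaution, since matrix algebras are AI but not $\mathcal{Z}$-stable, an edge case the paper itself glosses over by deferring to its remark on the finite-dimensional situation.
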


 Dini's theorem shows that over a compact base space, pointwise convergence of increasing sequences of continuous functions implies uniform convergence.  It follows that in the simple special case of C*-algebras having Cuntz semigroups isomorphic to the positive lower semicontinuous functions on a compact space, the rather coarse topology provided by the definition of the injective tensor product is equivalent to a much finer topology. Thus, for the class of algebras under consideration, we expect that most of the ways to construct the tensor product will coincide.
The tensor product of  \cite{APT14} involves both a joint continuity condition  and a separate continuity condition. (See page 7 of \cite{APT14} for the precise statement of both conditions.) In the closely related setting of tensor products of topological vector spaces, a tensor product involving both a joint continuity condition  and a separate continuity condition would lie strictly between the inductive tensor product and the projective tensor product. We have already shown (see Proposition \ref{prop:same.topologies}) that in our setting, the equivalent of the inductive tensor product and the projective tensor product happen to coincide. At present, however, we do not have a good topological description of the tensor product of  \cite{APT14}.
 

\begin{remark} 
The tensor product of two abelian semigroups is constructed by forming a free abelian semigroup and passing to the quotient in which $(a + a')\otimes b=(a\otimes b)+(a'\otimes b)$ and $(a\otimes b')+(a \otimes b)=a\otimes (b+b')$.
 One can equivalently define the tensor product as an abstract abelian semigroup on which additive maps are given by biadditive maps on the original semigroups.   However, Grillet in \cite{grillet}, Theorem 2.1, shows that, as for any universal property definition, the tensor product of abelian semigroups is unique up to semigroup isomorphism. 
\end{remark}

We recall that the stable rank one property was used in a fundamental way in our proofs, to, for example, simplify the compact containment property. (We further used the stable rank one property to obtain cancellation in the projection semigroup, and to ensure that projection class and purely positive elements can be distinguished by a spectral criterion.)  Investigating the exact relationship between the several possible tensor products suggests venturing beyond the stable rank one case; in which case additional issues arise that are beyond the scope of this paper, but seem to be a reasonable question for  future investigation.

\medskip
\end{document}